\documentclass[12pt,a4paper]{article}
\usepackage{indentfirst}
\setlength{\parskip}{3\lineskip}
\usepackage{amsmath,amssymb,amsfonts,amsthm,graphics}
\usepackage{makeidx}
\usepackage{ifpdf}
 \setlength{\parindent}{1em}

\setlength{\textwidth}{160mm} \setlength{\textheight}{23cm}
\setlength{\headheight}{0cm} \setlength{\topmargin}{0pt}
\setlength{\headsep}{0pt} \setlength{\oddsidemargin}{0pt}
\setlength{\evensidemargin}{0pt}
\newtheorem{thm}{Theorem}

\newtheorem{lem}{Lemma}

\theoremstyle{definition}

\newtheorem{example}{Example}

\def\-{\mbox{--}}

\newtheorem{pro}{Proposition}

\newtheorem{cor}{Corollary}

\newtheorem{obs}{Observation}

\begin{document}
\title{\Large\bf The generalized 3-edge-connectivity\\ of lexicographic product graphs\footnote{Supported by NSFC No.11371205.}}
\author{\small  Xueliang~Li, Jun Yue, Yan~Zhao\\
\small Center for Combinatorics and LPMC-TJKLC\\
\small Nankai University, Tianjin 300071, China\\
\small lxl@nankai.edu.cn; yuejun06@126.com; zhaoyan2010@mail.nankai.edu.cn}
\date{}
\maketitle
\begin{abstract}
The generalized $k$-edge-connectivity $\lambda_k(G)$ of a graph $G$
is a generalization of the concept of edge-connectivity. The
lexicographic product of two graphs $G$ and $H$, denoted by $G\circ
H$, is an important graph product. In this paper, we mainly study
the generalized 3-edge-connectivity of $G \circ H$, and get upper
and lower bounds of $\lambda_3(G \circ H)$. Moreover, all bounds are
sharp.

{\flushleft\bf Keywords}: edge-disjoint paths, edge-connectivity,
Steiner tree, edge-disjoint Steiner trees, packing, generalized
edge-connectivity,

{\flushleft\bf AMS subject classification 2010}: 05C05, 05C40, 05C70,
05C76.

\end{abstract}

\section{Introduction}
All graphs considered in this paper are simple, finite and
undirected. We follow the terminology and notation of Bondy and
Murty \cite{Bondy}. For a graph $G$, the local edge-connectivity
between distinct vertices $u$ and $v$, denoted by $\lambda(u,v)$, is
the maximum number of pairwise edge disjoint $uv$-paths. A
nontrivial graph $G$ is $k$-$edge$-$connected$ if $\lambda(u,v)\geq
k$ for any two distinct vertices $u$ and $v$ of $G$. The edge-connectivity $\lambda(G)$ of a graph $G$ is the maximum value of $k$
for which $G$ is $k$-edge-connected, see \cite{Oellermann1}.

Naturally, the concept of edge-connectivity can be extended to a new
concept, the generalized $k$-edge-connectivity, which was
introduced by Li et al. \cite{LLMS}. For a graph $G=(V,E)$ and a set $S\subseteq V(G)$
of at least two vertices, \emph{an $S$-Steiner tree} or \emph{a
Steiner tree connecting $S$} (or simply, \emph{an $S$-tree}) is a
such subgraph $T=(V',E')$ of $G$ that is a tree with $S\subseteq
V'$. Let $\lambda(S)$ denote the
maximum number of pairwise edge-disjoint Steiner trees $T_1, T_2,
\cdots,$ $ T_{\ell}$ in $G$ such that if $E(T_i)\cap
E(T_j)=\emptyset$ and $S\subseteq V(T_i)\cap V(T_j)$ for any pair of
distinct integers $i$, $j$ with $1\leq i,j\leq \ell$. Then the \emph{generalized
$k$-edge-connectivity} $\lambda_k(G)$ of $G$ is defined as
$\lambda_k(G)=\min\{\lambda(S)\,|\, S\subseteq V(G), |S|=k\}$.
Obviously, $\lambda_2(G)=\lambda(G)$. Set $\lambda_k(G)=0$ if $G$ is
disconnected. Some results of the edge-connectivity and the
generalized edge-connectivity can refer to
\cite{LLMS,Oellermann1,Volkmann} for details.

The generalized edge-connectivity is closely linked to an important
problem \emph{Steiner tree packing problem}, which asks for finding
a set of maximum number of edge-disjoint $S$-trees in a given graph
$G$ where $S\subseteq V(G)$, see \cite{Kriesell1,West,Grotschel1}.
An extreme of Steiner tree packing problem is called the
\emph{Spanning tree packing problem} where $S=V(G)$. For any graph
$G$, the \emph{spanning tree packing number} or \emph{$STP$ number},
is the maximum number of edge-disjoint spanning trees contained in
$G$. For the $STP$ number, we refer to \cite{OY,
Palmer,Barden,Fragopoulou,Itai,Roskind, Wang}.
The difference between the Steiner tree packing problem and the
generalized edge-connectivity is as follows: the former problem
studies local properties of graphs since $S$ is given beforehand,
while the latter problem focuses on global properties of graphs
since it first needs to compute the maximum number $\lambda(S)$ of
$S$-trees and then $S$ runs over all $k$-subsets of $V(G)$ to get
the minimum value of $\lambda(S)$.

From a theoretical perspective, both extremes of the generalized
edge-connectivity problem are fundamental theorems in combinatorics.
One extreme is when we have two terminals. In this case
edge-disjoint trees are just edge-disjoint paths between the two
terminals, and so the problem becomes the well-known Menger theorem.
The other extreme is when all the vertices are terminals. In this
case edge-disjoint trees are just spanning trees of the graph, and
so the problem becomes the classical Nash-Williams-Tutte theorem,
see \cite{Nash,Tutte}.

Product graphs are an important method to construct large graphs from small ones, so it has many applications in the design and analysis of networks, see \cite{Grotschel1,Ku,LXZW}. The lexicographic product, together with the Cartesian product, the strong product and the direct product, is the main four standard products of graphs. More information about the (edge) connectivity of these four product graphs can be found in \cite{LLSun,WS,Hammack,Klavzar,Bresar,Feng,XY}.

In this paper, we study the generalized edge-connectivity of the lexicographic product graph and get the following theorems.

\begin{thm}\label{thm2}
Let $G$ and $H$ be two non-trivial graphs and $G$ is connected. Then $\lambda_3(G\circ H)\geq \lambda_3(H)+\lambda_3(G)|V(H)|$. Moreover, the lower bound is sharp.
\end{thm}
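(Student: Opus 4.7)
The plan is to show $\lambda(S)\ge\lambda_3(H)+n\lambda_3(G)$ for every 3-set $S\subseteq V(G\circ H)$, where $n=|V(H)|$. Let $\pi\colon V(G\circ H)\to V(G)$ be projection, $G_u=\{u\}\times V(H)$ the horizontal copy of $H$ over $u\in V(G)$, and $G^w=V(G)\times\{w\}$ the vertical copy of $G$ at level $w\in V(H)$; the $n$ vertical copies are pairwise edge-disjoint, and every edge of $G\circ H$ is either a Type 1 edge inside some $G_u$ or a Type 2 edge corresponding to an edge of $G$. I case-split on $|\pi(S)|\in\{1,2,3\}$ and in every case exhibit the packing as the disjoint union of a vertical family of $n\lambda_3(G)$ trees and a horizontal family of $\lambda_3(H)$ trees.

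For the vertical family, enlarge $\pi(S)$ to a 3-set $S_G\subseteq V(G)$ if necessary (when $|V(G)|<3$ we have $\lambda_3(G)=0$ and this family is empty) and fix $\lambda_3(G)$ edge-disjoint Steiner trees $T_1^G,\ldots,T_{\lambda_3(G)}^G$ for $S_G$ in $G$. For every level $w\in V(H)$ and every $k$, let $T_{k,w}$ be the level-$w$ lift of $T_k^G$ together with, for every terminal $x=(u_x,v_x)\in S$ having $v_x\ne w$, one attachment edge $(u_x,v_x)(u_x^{(k)},w)$ where $u_x^{(k)}\in N_{T_k^G}(u_x)$. These $n\lambda_3(G)$ trees are pairwise edge-disjoint: across different levels disjointness is trivial, and for the same level $w$ with $k\ne k'$, edge-disjointness of $T_k^G$ and $T_{k'}^G$ both makes the level lifts disjoint and forces $N_{T_k^G}(u_x)\cap N_{T_{k'}^G}(u_x)=\emptyset$, so one can arrange $u_x^{(k)}\ne u_x^{(k')}$.

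For the horizontal family, let $u_0\in\pi(S)$ maximize $|S\cap G_{u_0}|$ and write $\{v_1,v_2,v_3\}$ for the $H$-coordinates of $S$. Take $\lambda_3(H)$ edge-disjoint Steiner trees $T_1^H,\ldots,T_{\lambda_3(H)}^H$ for $\{v_1,v_2,v_3\}$ in $G_{u_0}\cong H$. These use only Type 1 edges inside $G_{u_0}$, hence are automatically edge-disjoint from the vertical family. In Case 1 each $T_k^H$ is already an $S$-tree. In Cases 2 and 3, for every terminal $(u,v_j)\in S$ with $u\ne u_0$, I extend $T_k^H$ by a bridge of Type 2 edges from $(u_0,v_j)\in V(T_k^H)$ to $(u,v_j)$: the bridge for the $k$-th tree follows a fixed $u_0u$-path $P=u_0p_1\cdots p_{d-1}u$ in $G$, hopping through vertices $(p_i,w_k^{(i)})$ whose second coordinates alternate between two distinct levels chosen to depend on $k$. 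Having alternating second coordinates makes every interior bridge edge a cross-level Type 2 edge rather than a level edge, so it cannot collide with a vertical tree's level lift; choosing the level-pairs distinctly across $k$, which is possible since $\lambda_3(H)\le\delta(H)\le n-1$, makes the bridges for different $k$ mutually edge-disjoint.

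The main obstacle is the edge-disjointness check for the two outer bridge edges $(u_0,v_j)(p_1,w_k^{(1)})$ and $(p_{d-1},w_k^{(d-1)})(u,v_j)$, which touch $S$ and therefore could duplicate an attachment edge $(u_x,v_x)(u_x^{(k')},w)$ of some vertical tree at the same level. I prevent this by jointly choosing the attachment neighbors $u_x^{(k')}\in N_{T_{k'}^G}(u_x)$ and the bridge endpoints $p_1\in N_G(u_0)$, $p_{d-1}\in N_G(u)$ so that they avoid each other; the degree bound $\deg_G(u_x)\ge\lambda(G)\ge\lambda_3(G)$, together with the freedom to re-choose neighbors inside each $T_{k'}^G$ and, if necessary, to re-route $P$ via Menger's theorem, guarantees enough room. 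In Case 3 the two bridges of a single $T_k^H$ must additionally be made edge-disjoint from each other by choosing their underlying $G$-paths edge-disjoint. This combinatorial bookkeeping is the technical heart of the argument; once it is settled, the two families combine into the required $\lambda_3(H)+n\lambda_3(G)$ edge-disjoint $S$-trees.
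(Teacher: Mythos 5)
Your two-family architecture (a ``vertical'' family of $n\lambda_3(G)$ level-lifts of Steiner trees of $G$ with pendant attachment edges, plus a ``horizontal'' family of $\lambda_3(H)$ trees inside one $H$-layer extended by alternating-level bridges) is the same kind of two-stage decomposition the paper uses, but the step you defer as ``combinatorial bookkeeping'' is not merely technical: as specified, the construction can fail outright. Count the edges available at a bridged terminal $y=(u,v_j)$, $u\neq u_0$, in the tight case $d_G(u)=\lambda_3(G)$ (realizable: in $K_{2,3}$ every degree-$2$ vertex has $d_G(u)=\lambda_3(K_{2,3})=2=\delta$). Then each of the $\lambda_3(G)$ edge-disjoint trees $T^G_k$ meets $u$ in exactly one edge, so the sets $N_{T^G_k}(u)$ are forced singletons partitioning $N_G(u)$. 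Your vertical family therefore uses, at $y$, the attachment edge $y(u^{(k)},w)$ for every $k$ and every level $w\neq v_j$, and the lift edge $y(u^{(k)},v_j)$ for every $k$ at level $v_j$ --- that is, \emph{all} $\lambda_3(G)\cdot|V(H)|=d_G(u)\cdot|V(H)|$ edges joining $y$ to the neighbouring $H$-layers. Your bridges are required to enter $y$ by a cross-layer edge $(p_{d-1},w_k^{(d-1)})(u,v_j)$, and none is left; the bound $d_G(u_x)\geq\lambda(G)\geq\lambda_3(G)$ you invoke gives exactly zero slack here. The paper's analogue of your horizontal family avoids this by entering the terminal's own $H$-layer at a non-terminal vertex $(u,\alpha_i)$ and finishing with a two-type edge $(u,\alpha_i)y$ inside $H(u)$, which the other family never touches; and in the other tight subcases it has to invoke Proposition~\ref{delta-1}/Observation~\ref{k+1} to manufacture a vertex of degree at least $\lambda_3+1$ --- precisely the spare room your ``enough room'' sentence silently presumes.

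Two further omissions: within the vertical family you only check collisions of attachment edges for a fixed terminal across different $k$, but two \emph{different} terminals $x=(u_1,v_1)$ and $x'=(u_2,v_2)$ can contribute the same three-type edge $(u_1,v_1)(u_2,v_2)$ --- as the attachment of $x$ in $T_{k,v_2}$ and of $x'$ in $T_{k,v_1}$ --- whenever $u_1u_2\in E(T^G_k)$ and the choices $u_1^{(k)}=u_2$, $u_2^{(k)}=u_1$ are made (and sometimes one of them is forced); this needs an explicit argument. And the statement also asserts sharpness, which you do not address (the paper gets it from $\lambda_3(P_{n_1}\circ P_{n_2})=n_2+1$ via Observation~\ref{lambdadelta}). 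So the skeleton is right and close in spirit to the paper's Stages~I and~II, but the proof as written has a genuine hole exactly where the paper spends its three lemmas and their many subcases.
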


\begin{thm}\label{thm3}
Let $G$ and $H$ be two non-trivial graphs and $G$ is connected. Then
$$
\lambda_3(G\circ H)\leq \min \Big \{\Big \lfloor
\frac{4\lambda_3(G)+2}{3}\Big \rfloor
|V(H)|^2,\delta(H)+\delta(G)|V(H)|\Big \}.
$$
Moreover, the upper bound is sharp.
\end{thm}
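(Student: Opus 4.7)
The proof of Theorem~\ref{thm3} splits into two independent upper-bound arguments, one for each term of the claimed minimum.

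For the degree bound $\lambda_3(G \circ H) \leq \delta(H) + \delta(G)|V(H)|$, I use the elementary inequality $\lambda_k(F) \leq \delta(F)$ valid for any graph $F$ and any $k \geq 2$: if $v$ is a minimum-degree vertex and $S$ is any 3-set containing $v$, then every edge-disjoint Steiner tree for $S$ contains at least one of the $\delta(F)$ edges incident to $v$. A direct computation from the definition of the lexicographic product gives $\deg_{G \circ H}(g, h) = \deg_H(h) + \deg_G(g)|V(H)|$, so $\delta(G \circ H) = \delta(H) + \delta(G)|V(H)|$ and the inequality follows.

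For the $4/3$ bound $\lambda_3(G \circ H) \leq \lfloor (4\lambda_3(G)+2)/3 \rfloor |V(H)|^2$, write $k = \lambda_3(G)$ and $n = |V(H)|$. My plan is to produce a 3-set $S \subset V(G \circ H)$ together with a 3-partition of $V(G \circ H)$ separating its elements whose total cross-edge count $c$ satisfies $\lfloor c/2 \rfloor \leq \lfloor (4k+2)/3 \rfloor n^2$. The Steiner-tree cut bound $\lambda(S) \leq \lfloor c/2 \rfloor$---obtained from the observation that contracting the three parts turns any edge-disjoint Steiner tree into a connected multigraph on 3 nodes, hence uses at least two cross edges---then closes the argument. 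I pick $\{u_1, u_2, u_3\} \subset V(G)$ realizing $\lambda_G(\{u_1, u_2, u_3\}) = k$, fix a 3-partition $(V_1, V_2, V_3)$ of $V(G)$ with $u_i \in V_i$, lift to the partition of $V(G \circ H)$ into the three parts $V_i \times V(H)$ ($i = 1, 2, 3$), and take $S = \{(u_1, h), (u_2, h), (u_3, h)\}$ for some fixed $h \in V(H)$. Intra-copy $H$-edges never cross the lifted partition, while each $G$-edge between distinct parts $V_i, V_j$ contributes the full $K_{n,n}$-fibre of $n^2$ crossing edges, giving $c = c_G \cdot n^2$ with $c_G = \sum_{i < j} |E_G(V_i, V_j)|$.

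The main obstacle is the cut-bound lemma needed to close the argument: for every graph $G$ and every 3-set $\{u_1, u_2, u_3\}$ with $\lambda_G(\{u_1, u_2, u_3\}) = k$, there is a 3-partition of $V(G)$ with $u_i \in V_i$ whose cut size satisfies $c_G \leq 2\lfloor (4k+2)/3 \rfloor$. The trivial lower bound $c_G \geq 2k$ from $\lambda_G \leq c_G/2$ leaves a window of width only about $2k/3$, so the factor $4/3$ is tight and the argument is delicate; I expect to prove the lemma by a careful analysis of the $k$ edge-disjoint Steiner trees witnessing $\lambda_G(\{u_1, u_2, u_3\}) = k$ combined with a local rearrangement of candidate 3-partitions to reduce cross edges. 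Finally, sharpness is established by explicit examples: the degree bound is attained by taking $G$ and $H$ both complete, where one can compute directly that $\lambda_3(G \circ H) = \delta(G \circ H)$, and the $4/3$ bound is matched by a carefully chosen $G$ whose minimum 3-cut is tight against its 3-Steiner-tree packing.
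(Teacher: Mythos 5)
Your argument for the degree term is fine and matches what the paper needs (Observation~\ref{lambdadelta} plus $\delta(G\circ H)=\delta(H)+\delta(G)|V(H)|$), but the rest of the proposal has two genuine gaps. First, and most seriously, your treatment of the term $\lfloor(4\lambda_3(G)+2)/3\rfloor|V(H)|^2$ rests entirely on the ``cut-bound lemma'' that you state but do not prove: for every $3$-set with $\lambda_G(\{u_1,u_2,u_3\})=k$ there is a separating $3$-partition of $V(G)$ with at most $2\lfloor(4k+2)/3\rfloor$ cross edges. This is a local min--max statement about Steiner tree packing versus minimum $3$-cuts; it is not a routine rearrangement argument, you give no proof of it, and the whole bound collapses without it. It is also unnecessary: the paper gets this term by the short chain $\lambda_3(G\circ H)\leq\lambda(G\circ H)$ (Proposition~\ref{lambdalambda}), $\lambda(G\circ H)=\min\{\lambda(G)|V(H)|^2,\delta(H)+\delta(G)|V(H)|\}$ (the Yang--Xu formula, Theorem~\ref{th4}), and the inversion $\lambda(G)\leq\lfloor(4\lambda_3(G)+2)/3\rfloor$ of the known global inequality $\lambda_3(G)\geq(3\lambda(G)-2)/4$ (Proposition~\ref{lowerlambda}). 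In particular no $3$-partition is needed at all, since $\lambda_3\leq\lambda_2=\lambda$ lets one work with an ordinary $2$-cut of $G$ lifted to $G\circ H$.

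Second, your sharpness example for the degree bound is wrong. If $G$ and $H$ are both complete then $G\circ H$ is complete, and by Proposition~\ref{delta-1} (two adjacent vertices of minimum degree) one has $\lambda_3(K_n)=n-2<n-1=\delta(K_n)$, so the degree bound is \emph{not} attained there. The paper's sharpness witness is $P_{n_1}\circ P_{n_2}$: the lower bound of Theorem~\ref{thm1} gives $\lambda_3(P_{n_1}\circ P_{n_2})\geq n_2+1$, while $\delta(P_{n_2})+\delta(P_{n_1})n_2=n_2+1$ is the smaller term of the minimum, so equality holds (Corollary~\ref{cor1}). Your claim that the $4/3$ term is ``matched by a carefully chosen $G$'' is likewise unsubstantiated. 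To repair the proposal you should replace the $3$-partition argument by the citation chain above (or prove your lemma, which is a far harder task than the theorem itself) and replace the complete-graph example by one where the minimum is actually achieved.
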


\section{Preliminaries}

Let $G$ be a graph and $S\subseteq V(G)$. Let $G[S]$ denote the induced subgraph of $G$ on the vertex set $S$ and let $d_G(v)$ denote the degree of $v$ in $G$, where $v \in V(G)$. If $u$ and $v$ are two vertices on a path $P$, $uPv$ will denote the segment of $P$ from $u$ to $v$. Given sets $X$, $Y$ of vertices, we call a path $P$ an $XY$-$path$ if the end vertices of $P$ are in $X$ and $Y$, respectively, and all inner vertices are in neither $X$ nor $Y$. Two distinct paths are \emph{edge disjoint} if they have no edges in common. Two distinct paths are \emph{internally disjoint} if they have no internal vertices in common. Two distinct paths are \emph{vertex disjoint} if they have no vertices in common. For $X=\{x_1,x_2,\cdots,x_k\}$ and $Y=\{y_1,y_2,\cdots,y_k\}$, an $XY$-$linkage$ is defined as a set $Q$ of $k$ vertex-disjoint paths $x_iP_iy_i$, $1\leq i\leq k$.

Let $G=(V_1,E_1)$, $H=(V_2,E_2)$, the lexicographic product $G\circ H$ of $G$ and $H$ is defined as follows: $V(G\circ H)=V_1\times V_2$, two vertices $(u,v)$ and $(u',v')$ are adjacent if and only if either $uu'\in E_1$ or $u=u'$, $vv'\in E_2$. On other words, $G\circ H$ is obtained by substituting a copy $H(u)$ of $H$ for every vertex $u$ of $G$ and joining all vertices of $H(u)$ with all vertices of $H(u')$ if $uu'\in E_1$. Unlike the other product, the lexicographic product does not satisfy the commutative law, that is, $G\circ H$ need not be isomorphic to $H\circ G$. By a simple observation, $G\circ H$ is connected if and only if $G$ is connected. Moreover, $\delta(G\circ H)=\delta(G)|V(H)|+\delta(H)$. The edge $(u,v)$$(u',v')$ is called one-type edge if $uu'\in E_1$ and $v=v'$; two-type edge if $vv'\in E_2$ and $u=u'$; three-type edge if $uu'\in E_1$ and $v\neq v'$.

The vertex set $G(v)=\{(u,v)|u\in V_1\}$ for some fixed vertex $v$ of $H$ is called a layer of graph $G$ or simply a $G$-$layer$. Analogously we define the $H$-$layer$ with respect to a vertex $u$ of $G$ and denote it by $H(u)$. It is not hard to see that any $G$-$layer$ induces a subgraph of $G\circ H$ that is isomorphic to
$G$ and any $H$-$layer$ induces a subgraph of $G\circ H$ that is isomorphic to $H$. For a subset $W$ of $V(G)$ with $W=\{u_1,\cdots,u_t\}$, $H(W)=H(u_1)\cup \cdots \cup H(u_t)$. $K_{u_1,\cdots,u_t}$ denotes a subgraph of $G\circ H$, where $V(K_{u_1,\cdots,u_t})=V(W\circ H)$, $E(K_{u_1,\cdots,u_t})=E(G[u_1,\cdots,u_t]\circ H)\setminus E(H(W))$, namely, the end vertices of an edge of $K_{u_1,\cdots,u_t}$ are in different $H$-layers.

Let $G$ be a connected graph, $S=\{x,y,z\}\subseteq V(G)$, and $T$ be an $S$-tree. By deleting some vertices and edges of $T$, it is easy to check that $T$ has exactly two types, one is called type $\uppercase\expandafter{\romannumeral1}$ if $T$ is just a path whose two end vertices belong to $S=\{x,y,z\}$; the other is called type $\uppercase\expandafter{\romannumeral2}$ if it is a tree with exactly three leaves $x$, $y$, $z$. Note that the vertices in a tree of type $\uppercase\expandafter{\romannumeral1}$ have degree two except the two end vertices in $S$. If $T$ is of type $\uppercase\expandafter{\romannumeral2}$, every vertex in $T\setminus S$ has degree two except one vertex of degree three. In this paper, we assume that each $S$-tree is of type $\uppercase\expandafter{\romannumeral1}$ or $\uppercase\expandafter{\romannumeral2}$.

\begin{pro}\label{k-2}
Let $G$ be a graph with $\lambda_3(G)=k\geq2$, $S=\{x,y,z\}\subseteq V(G)$. Then there exist $k-2$ edge-disjoint $S$-trees $T_1,\cdots,T_k$ such that $E(T_i)\cap E(G[S])=\emptyset$.
\end{pro}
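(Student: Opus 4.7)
The plan is to start with $k$ pairwise edge-disjoint $S$-trees $T_1,\ldots,T_k$, guaranteed by the hypothesis $\lambda_3(G)=k$, and to modify at most three of them so that the resulting collection still consists of $k$ edge-disjoint $S$-trees while at most two of them meet $E(G[S])$. Two structural remarks trim the problem. First, a type $\uppercase\expandafter{\romannumeral2}$ tree has $x,y,z$ as leaves, so if it contained some edge of $G[S]$, say $xy$, then both $x$ and $y$ would have each other as their unique neighbour in the tree, forcing the tree to be the single edge $xy$, which omits $z$. Hence no type $\uppercase\expandafter{\romannumeral2}$ tree meets $E(G[S])$. Second, a type $\uppercase\expandafter{\romannumeral1}$ tree is a simple path whose two ends lie in $S$, so it cannot contain the edge joining its two endpoints and therefore meets $G[S]$ in at most two edges (those incident to the middle terminal). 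Consequently, if some $T_i$ already uses two edges of $G[S]$, at most one further tree can use the remaining edge, so at most two trees meet $E(G[S])$ in total, and the conclusion is immediate.

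The only remaining case is that $G[S]$ is a triangle whose three edges are used, one apiece, by three distinct type $\uppercase\expandafter{\romannumeral1}$ trees, say $xy\in T_1$, $yz\in T_2$, $xz\in T_3$. For each $i$, deleting the distinguished edge from $T_i$ splits $T_i$ into an isolated terminal and a path $Q_i$ joining the other two terminals; the paths $Q_1,Q_2,Q_3$ are pairwise edge-disjoint and lie in $G-E(G[S])$. By inspecting the possible shapes of a type $\uppercase\expandafter{\romannumeral1}$ tree, the endpoint pair of $Q_1$ is $\{y,z\}$ or $\{x,z\}$, of $Q_2$ is $\{x,y\}$ or $\{x,z\}$, and of $Q_3$ is $\{x,y\}$ or $\{y,z\}$.

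The key combinatorial observation is that for at least one pair $\{i,j\}$, the endpoint sets of $Q_i$ and $Q_j$ together cover $S$. Indeed the only way a particular pair can fail is that both paths share the same endpoint pair, producing three possible obstructions: $Q_1,Q_2$ both $xz$-paths, $Q_1,Q_3$ both $yz$-paths, or $Q_2,Q_3$ both $xy$-paths. Any two of these obstructions would force one of $Q_1,Q_2,Q_3$ to carry two distinct endpoint pairs, which is impossible, so at least one covering pair exists. Fix such a pair $(Q_i,Q_j)$; the union $Q_i\cup Q_j$ is connected and contains $S$, and hence contains an $S$-tree $T_1'$ with $E(T_1')\cap E(G[S])=\emptyset$. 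For the leftover path $Q_\ell$ whose missing terminal is $c$, attach either edge of $G[S]$ incident to $c$ to obtain an $S$-tree $T_2'$; the two remaining edges of the triangle span $S$ and form a third $S$-tree $T_3'$. These new trees are pairwise edge-disjoint, lie inside $E(T_1)\cup E(T_2)\cup E(T_3)$, and therefore are also edge-disjoint from $T_4,\ldots,T_k$. Replacing $T_1,T_2,T_3$ by $T_1',T_2',T_3'$ yields $k$ edge-disjoint $S$-trees of which $T_1',T_4,\ldots,T_k$, a total of $k-2$ trees, avoid $E(G[S])$.

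I expect the main obstacle to be the triangle case above: it forces the structural classification of each $Q_i$ by its endpoint pair, and the slightly delicate combinatorial fact that the three obstructions to finding a covering pair cannot coexist. The earlier parts (the rulings on types $\uppercase\expandafter{\romannumeral1}$ and $\uppercase\expandafter{\romannumeral2}$, and the subcase where some tree already carries two $G[S]$-edges) are then immediate book-keeping.
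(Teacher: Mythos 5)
Your proof is correct and follows essentially the same strategy as the paper: start from $k$ edge-disjoint $S$-trees, note that at most three of them (necessarily of type $\uppercase\expandafter{\romannumeral1}$, each carrying one edge of a triangle $G[S]$) can meet $E(G[S])$, and in that case recombine the three path-remnants $Q_1,Q_2,Q_3$ with the triangle edges into three new edge-disjoint $S$-trees of which one avoids $G[S]$. The only difference is that your ``covering pair'' argument spells out explicitly the case analysis that the paper delegates to Figure~1.
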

\begin{proof}
By the definition of $S$-trees, we know that  $|E(T_i)\cap E(G[S])|\leq2$ and $|\{T_i\,|\,E(T_i)\cap E(G[S])\neq\emptyset\}|\leq3$. Let $\{T_1,\cdots,T_k\}$ be $k$ edge-disjoint $S$-trees. If $|\{T_i\,|\,E(T_i)\cap E(G[S])\neq\emptyset\}|\leq2$, we are done. Thus, suppose $|\{T_i\,|\,E(T_i)\cap E(G[S])\neq\emptyset\}|=3$. Without loss of generality, assume $E(T_i)\cap E(G[S])\neq\emptyset$, where $i=1,2,3$. Then $T_1$, $T_2$, $T_3$ have the structures $F_1$ or $F_2$ as shown in Figure 1. For these two cases, we can obtain $T'_1$, $T'_2$, $T'_3$ from $T_1$, $T_2$, $T_3$ such that $E(T'_1)\cap E(G[S])=\emptyset$. See Figure $F'_1$ and $F'_2$, where the tree $T'_1$ is shown by gray lines. Thus $T'_1,T_4,\cdots,T_k$ are our desired $S$-trees.
\end{proof}

\begin{figure}[h,t,b,p]
\begin{center}
\scalebox{0.7}[0.7]{\includegraphics{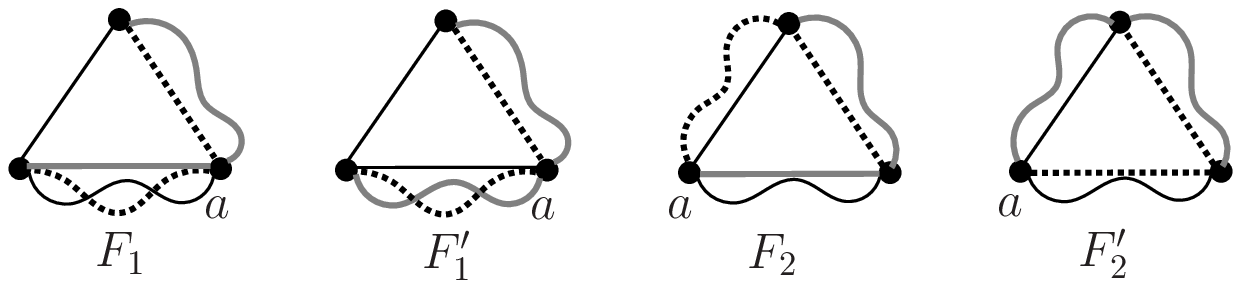}}\\[20pt]

Figure~1. Three $S$-trees of type $\uppercase\expandafter{\romannumeral1}$.
\end{center}
\end{figure}

Li et al. \cite{LLMS,LM4} got the following results which are useful for our proof.

\begin{pro} \cite {LLMS} \label{lambdalambda}
For any graph $G$ of order $n$, $\lambda_k(G)\leq \lambda(G)$. Moreover, the upper bound is
tight.
\end{pro}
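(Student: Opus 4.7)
The plan is to prove the inequality by a minimum-cut argument, and then exhibit a small family of graphs on which equality is attained. First I would fix a minimum edge cut $[X,Y]$ of $G$, so that $|[X,Y]|=\lambda(G)$ and both $X$ and $Y$ are nonempty. Since $|V(G)|=n\geq k$, we have $|X|+|Y|\geq k$, so I can choose a $k$-subset $S\subseteq V(G)$ with $S\cap X\neq\emptyset$ and $S\cap Y\neq\emptyset$; concretely, place $\min\{|X|,k-1\}$ vertices in $X$ and fill the remainder from $Y$.

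The next step is the crucial observation: any Steiner tree $T$ connecting $S$ must contain at least one edge of $[X,Y]$, because $T$ is connected and meets both sides of the cut. Hence if $T_1,\ldots,T_\ell$ are pairwise edge-disjoint $S$-trees, they consume at least $\ell$ distinct edges of $[X,Y]$, forcing $\ell\leq|[X,Y]|=\lambda(G)$. This yields $\lambda(S)\leq\lambda(G)$, and since $\lambda_k(G)=\min_{|S|=k}\lambda(S)$, the inequality $\lambda_k(G)\leq\lambda(G)$ follows immediately.

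For sharpness I would display a graph in which the bound is attained. The simplest example is any tree $G$ on at least $k$ vertices: here $\lambda(G)=1$ while $\lambda_k(G)=1$ as well, since $G$ itself contains exactly one $S$-tree (the unique Steiner subtree of $G$) and removing any edge disconnects $G$, so no two edge-disjoint $S$-trees can coexist. A slightly richer family is obtained by attaching a pendant edge to any $(k-1)$-edge-connected graph of order at least $k$, which forces $\lambda(G)=1=\lambda_k(G)$ for the same reason.

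There is essentially no substantive obstacle in this argument; the only point that deserves care is the choice of $S$ straddling the minimum cut, which rests on $n\geq k$ together with $|X|,|Y|\geq 1$. Once $S$ is chosen, the rest is simply the defining property of an edge cut applied to the edges each $S$-tree is forced to use. The sharpness half is purely constructive and requires no clever estimate, only the verification that the proposed extremal graphs have $\lambda_k=1$.
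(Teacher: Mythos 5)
Your argument is correct. Note that the paper does not prove this proposition at all -- it is quoted from the reference [LLMS] -- so there is no in-paper proof to compare against; your write-up is a valid self-contained substitute. Your minimum-cut route is sound: the choice of $S$ straddling a minimum edge cut $[X,Y]$ is legitimate because $|X|+|Y|=n\geq k$, every $S$-tree is connected and meets both sides and hence uses a cut edge, and edge-disjointness then caps the number of trees by $|[X,Y]|=\lambda(G)$; the degenerate case of a disconnected $G$ is covered by the convention $\lambda_k(G)=0$. The usual (and marginally more direct) argument instead picks two vertices $u,v$ with $\lambda(u,v)=\lambda(G)$, puts both into $S$, and observes that $\ell$ edge-disjoint $S$-trees yield $\ell$ edge-disjoint $uv$-paths, so $\lambda(S)\leq\lambda(u,v)$; this avoids invoking the cut formulation of Menger's theorem, but the two arguments are essentially equivalent. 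Your sharpness examples (trees, or a pendant edge attached to a highly edge-connected graph) are fine, since any two $S$-trees in a tree must share the unique minimal subtree spanning $S$, which has at least one edge because $|S|\geq 2$.
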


\begin{obs}\cite {LLMS} \label{lambdadelta}
If G be a connected graph, then $\lambda_k(G)\leq \delta(G)$. Moreover, the upper bound is
tight.
\end{obs}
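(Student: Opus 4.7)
The plan is to derive the upper bound by a one-line degree-count at a minimum-degree vertex, and then to verify tightness by pointing to a concrete family of graphs that attains the bound.

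For the upper bound, I would pick $v\in V(G)$ with $d_G(v)=\delta(G)$ and let $S$ be any $k$-subset of $V(G)$ containing $v$ (we may assume $|V(G)|\ge k$, since otherwise the claim is vacuous). Let $T_1,\dots,T_\ell$ be any family of pairwise edge-disjoint $S$-trees. Because $v\in S$ and each $T_i$ is a tree containing at least two vertices from $S$, the vertex $v$ has positive degree in every $T_i$; hence each $T_i$ uses at least one edge of $G$ incident to $v$. Edge-disjointness forces these chosen incident edges to be pairwise distinct across $i$, so $\ell\le d_G(v)=\delta(G)$. Taking the maximum over packings gives $\lambda(S)\le\delta(G)$, and minimizing over all $k$-subsets yields $\lambda_k(G)\le\delta(G)$.

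For tightness, the case $k=2$ already shows the inequality cannot be improved: for $G=K_n$ we have $\delta(G)=n-1$, and Menger's theorem gives $\lambda_2(K_n)=\lambda(K_n)=n-1$. For general $k$, equality is attained by sufficiently dense graphs in which one can construct $\delta(G)$ edge-disjoint $S$-trees explicitly through a minimum-degree vertex $v\in S$; concretely, in $K_n$ with $k$ fixed and $n$ large, one builds the packing by choosing distinct ``apex'' vertices in $V(G)\setminus S$ to route each tree.

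There is no genuine obstacle in this argument: the upper bound is immediate from the fact that every Steiner tree must spend at least one edge at each of its $S$-vertices, and tightness amounts to naming a graph and exhibiting a packing. The only conceptual content is the observation that the local edge budget at a terminal vertex caps the number of edge-disjoint Steiner trees, and this cap is already realized by complete graphs in the classical $k=2$ setting.
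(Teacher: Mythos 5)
Your upper-bound argument is correct and is exactly the standard one behind this observation (which the paper only cites from \cite{LLMS} rather than proving): every $S$-tree must use at least one edge at a terminal of minimum degree, so the local edge budget at that vertex caps the packing, giving $\lambda(S)\leq\delta(G)$ for any $S$ containing such a vertex and hence $\lambda_k(G)\leq\delta(G)$.

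Your tightness discussion, however, contains a genuine error for $k\geq 3$. You claim that in $K_n$ (with $k$ fixed and $n$ large) one can build $\delta(K_n)=n-1$ edge-disjoint $S$-trees by routing through distinct apex vertices in $V(G)\setminus S$. This is false: there are only $n-k$ such apex vertices, and in fact $\lambda_k(K_n)=n-\lceil k/2\rceil<n-1$ for $k\geq 3$. The paper's own Proposition~\ref{delta-1} already rules your claim out, since any two vertices of $K_n$ are adjacent vertices of minimum degree, forcing $\lambda_k(K_n)\leq n-2$ for $3\leq k\leq n$. The $k=2$ case ($\lambda(K_n)=n-1$) is a legitimate tightness witness, and for general $k$ a correct (and much easier) witness is any tree on at least $k$ vertices, where $\lambda_k=1=\delta$; the paper's Corollary~\ref{cor1} gives a nontrivial example with $k=3$. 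So the inequality is indeed tight, but not by the construction you describe.
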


\begin{pro} \cite{LM4}\label{delta-1}
Let $G$ be a connected graph of order $n$ with minimum degree
$\delta$. If there are two adjacent vertices of degree $\delta$,
then $\lambda_k(G)\leq \delta-1$ for $3\leq k\leq n$. Moreover, the
upper bound is sharp.
\end{pro}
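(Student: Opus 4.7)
The plan is to exhibit a specific $k$-set $S$ for which any family of edge-disjoint $S$-trees has size at most $\delta-1$, which then gives $\lambda_k(G)\leq \delta-1$ directly from the definition.

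Let $u,v$ be two adjacent vertices with $d_G(u)=d_G(v)=\delta$. Since $k\geq 3$, extend $\{u,v\}$ to a set $S\subseteq V(G)$ with $|S|=k$ by adjoining at least one further vertex $w\neq u,v$. Fix any family $T_1,\ldots,T_\ell$ of edge-disjoint $S$-trees. I will bound $\ell$ by counting edges incident to $u$, splitting on whether the edge $uv$ appears in any $T_i$.

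If no $T_i$ contains $uv$, then, because $u\in V(T_i)$ for every $i$, each $T_i$ must use at least one of the $\delta-1$ edges at $u$ distinct from $uv$; edge-disjointness makes these edges pairwise distinct across trees, so $\ell\leq \delta-1$. Otherwise, say $T_1$ contains $uv$. The crucial observation is that since $T_1$ also contains the third terminal $w\notin\{u,v\}$, the tree $T_1$ cannot be merely the edge $uv$; deleting $uv$ splits $T_1$ into two subtrees, and the one containing $w$ forces an extra edge at either $u$ or $v$. Without loss of generality $d_{T_1}(u)\geq 2$, so $T_1$ already uses one edge at $u$ besides $uv$. Each of the remaining $\ell-1$ trees avoids $uv$ but still uses at least one edge at $u$; altogether this accounts for $1+(\ell-1)=\ell$ pairwise distinct edges at $u$ different from $uv$, and since there are only $\delta-1$ such edges, $\ell\leq \delta-1$.

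The main obstacle is the second case, whose argument relies on having a terminal outside $\{u,v\}$; this is exactly where the hypothesis $k\geq 3$ is used, as for $k=2$ the tree consisting of the single edge $uv$ would not be ruled out, and only the weaker bound $\delta$ from Observation \ref{lambdadelta} would follow. For sharpness, the cycle $C_n$ with $n\geq 3$ has two adjacent vertices of degree $\delta=2$, and any three of its vertices partition the cycle into three arcs any two of which are shared by every pair of candidate Steiner trees, so $\lambda_k(C_n)=1=\delta-1$ for all $3\leq k\leq n$.
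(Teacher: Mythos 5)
Your proof is correct. Note that the paper does not prove this proposition at all --- it is imported from the reference [LM4] as a black box --- so there is no in-paper argument to compare against; your degree-counting at the two adjacent minimum-degree vertices (splitting on whether some tree uses the edge $uv$, and using the third terminal $w$ to force an extra edge at $u$ or $v$ in that tree) is the standard and complete way to establish the bound, and the cycle $C_n$ correctly witnesses sharpness.
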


From Proposition \ref{delta-1}, it is easy to get the following observation.

\begin{obs}\label{k+1}
Let $G$ be a connected graph with $\lambda_3(G)=k$, $x$ and $y$ be two adjacent vertices of $G$. Then $d_G(x)\geq k+1$ or $d_G(y)\geq k+1$.
\end{obs}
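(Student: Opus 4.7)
The plan is to prove Observation~\ref{k+1} by contradiction, leveraging Observation~\ref{lambdadelta} and Proposition~\ref{delta-1} as black boxes; the statement is essentially a rephrasing of Proposition~\ref{delta-1} in contrapositive form.

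Suppose for contradiction that both $d_G(x)\leq k$ and $d_G(y)\leq k$. I would first invoke Observation~\ref{lambdadelta}, which gives $\delta(G)\geq \lambda_3(G)=k$. Since every vertex has degree at least $\delta(G)\geq k$, the two assumed inequalities force $d_G(x)=d_G(y)=k=\delta(G)$. In particular, $x$ and $y$ are two adjacent vertices of $G$ both attaining the minimum degree.

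With this configuration in hand, I would apply Proposition~\ref{delta-1} (with the parameter role of $k$ there taken to be $3$), which yields $\lambda_3(G)\leq \delta(G)-1 = k-1$. This contradicts the hypothesis $\lambda_3(G)=k$, so at least one of $d_G(x)\geq k+1$ or $d_G(y)\geq k+1$ must hold.

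There is no real obstacle here; the only small subtlety is handling the degenerate cases so that Proposition~\ref{delta-1} actually applies. If $k=0$ the conclusion $d_G(x)\geq 1$ is immediate from $xy\in E(G)$, and if $|V(G)|<3$ then $\lambda_3(G)$ is vacuously $0$, so again we may assume $|V(G)|\geq 3$, making the hypothesis $3\leq k\leq n$ of Proposition~\ref{delta-1} legitimate.
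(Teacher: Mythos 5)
Your proof is correct and matches the paper's intended argument: the paper gives no explicit proof, stating only that the observation follows from Proposition~\ref{delta-1}, and your contrapositive argument (using Observation~\ref{lambdadelta} to force $d_G(x)=d_G(y)=\delta(G)=k$ and then deriving the contradiction $\lambda_3(G)\leq k-1$) is exactly that derivation, with the degenerate cases handled cleanly.
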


Before getting into our main results, we give an elementary observation.

\begin{obs}\label{obs2}
$(i)$~Let $G$ and $H$ be two non-trivial graphs and $G$ is connected, let $x$, $y$, $z$ be three distinct vertices of $H$ and $T_1,T_2,\cdots, T_{k}$ be $k$ edge-disjoint $\{x,y,z\}$-trees in $H$. Then $G\circ\bigcup_{i=1}^k T_i=\bigcup_{i=1}^k (G\circ T_i)$. Moreover if $V(T_i)\cap V(T_j)=W$ for $i\neq j$, then $E(G\circ T_i)\cap E(G\circ T_j)=E(G\circ W)\setminus E(W(G))$.

$(ii)$~Let $G$ and $H$ be two non-trivial graphs and $G$ is connected, let $x$, $y$, $z$ be three distinct vertices of $G$ and $T_1,T_2,\cdots, T_{k}$ be $k$ edge-disjoint $\{x,y,z\}$-trees in $G$. Then $\bigcup_{i=1}^k T_i\circ H=\bigcup_{i=1}^k (T_i\circ H)$. Moreover if $V(T_i)\cap V(T_j)=W$  for $i\neq j$, $E(T_i\circ H)\cap E(T_j\circ H)=E(H(W))$.
\end{obs}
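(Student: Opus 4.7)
The plan is to prove both parts by direct unpacking of the lexicographic product definition, using the three edge types (one-type, two-type, three-type) introduced in the preliminaries to classify edges systematically. The whole argument is combinatorial bookkeeping rather than anything conceptually deep; the only care needed is to track where each edge lives under each operation and to invoke edge-disjointness of the $T_i$'s at precisely the right step. Since $G\circ H$ is not symmetric in $G$ and $H$, parts (i) and (ii) cannot be obtained from each other by relabeling, so both must be checked, though the structural outline is identical.

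For part (i), I would first establish $G\circ\bigcup_{i=1}^{k}T_i=\bigcup_{i=1}^{k}(G\circ T_i)$ by verifying that vertex sets and edge sets agree. The vertex-set equality $V(G)\times\bigcup_i V(T_i)=\bigcup_i\bigl(V(G)\times V(T_i)\bigr)$ is immediate from the set-theoretic distributive law. For edges, I classify every candidate edge $(u,v)(u',v')$ by its type: two-type edges $(u=u', vv'\in E(H))$ lie in the LHS iff $vv'\in\bigcup_i E(T_i)$, iff they lie in some $G\circ T_i$ on the RHS; one-type edges $(uu'\in E(G), v=v')$ lie in either side iff $v\in\bigcup_i V(T_i)$; and three-type edges $(uu'\in E(G), v\neq v')$ are handled by checking when both second coordinates fall in the same $V(T_i)$ versus in the union $\bigcup_i V(T_i)$.

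For the ``moreover'' clause of (i), I analyze $E(G\circ T_i)\cap E(G\circ T_j)$ by type. A two-type edge $(u,v)(u,v')$ lies in $G\circ T_i$ iff $vv'\in E(T_i)$, so belonging to both $G\circ T_i$ and $G\circ T_j$ would force $vv'\in E(T_i)\cap E(T_j)=\emptyset$ by the edge-disjointness hypothesis; hence two-type edges contribute nothing. A one-type or three-type edge lies in $G\circ T_i$ iff both of its second coordinates lie in $V(T_i)$, so belonging to both trees' products requires those coordinates to lie in $W=V(T_i)\cap V(T_j)$; conversely, every such edge does lie in both $G\circ T_i$ and $G\circ T_j$. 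The set of one-type and three-type edges with second coordinates in $W$ is exactly $E(G\circ W)\setminus E(W(G))$, giving the stated formula.

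Part (ii) follows by a parallel argument with the roles of the two factors swapped. In $T_i\circ H$, every $H$-layer $H(w)$ for $w\in V(T_i)$ is present in full, so $E(H(w))\subseteq E(T_i\circ H)$ for each such $w$; two layers $H(w)$ with $w\in V(T_i)\cap V(T_j)=W$ are therefore common to $T_i\circ H$ and $T_j\circ H$, contributing exactly $E(H(W))$. Any common one-type or three-type edge would require its underlying $G$-edge to belong to both $T_i$ and $T_j$, which edge-disjointness forbids, so the intersection collapses precisely to $E(H(W))$. I do not anticipate any real obstacle; the sole subtlety is making sure the edge-disjointness of $\{T_i\}$ is applied at the correct case in each of the two parts.
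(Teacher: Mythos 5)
The paper gives no proof of this observation at all --- it is introduced as ``an elementary observation'' and merely illustrated by two examples and a figure --- so there is no argument of the authors' to compare yours against. Your strategy (unpack the definition and classify every edge of $G\circ H$ as one-type, two-type or three-type) is the natural and essentially only approach, and your treatment of part $(ii)$ is complete and correct: a cross edge of $T_i\circ H$ requires an underlying edge of $T_i$, so edge-disjointness of the $T_i$ eliminates all cross edges from the intersection, and what survives is exactly $E(H(W))$.

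Part $(i)$, however, contains two genuine gaps, both located precisely where your write-up waves its hands. First, the identity $G\circ\bigcup_{i}T_i=\bigcup_{i}(G\circ T_i)$ fails as an equality of edge sets: if $a\in V(T_1)\setminus V(T_2)$ and $b\in V(T_2)\setminus V(T_1)$ (take $T_1$, $T_2$ to be stars on $\{x,y,z\}$ with distinct centres $a$, $b$), then for any $uu'\in E(G)$ the three-type edge $(u,a)(u',b)$ lies in $G\circ(T_1\cup T_2)$ but in no single $G\circ T_i$; your phrase ``handled by checking when both second coordinates fall in the same $V(T_i)$ versus in the union'' names exactly this discrepancy but does not resolve it --- only the inclusion $\supseteq$ holds. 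Second, in the ``moreover'' clause your own (correct) case analysis shows that every one-type edge $(u,v)(u',v)$ with $uu'\in E(G)$ and $v\in W$ lies in both $G\circ T_i$ and $G\circ T_j$; but under the paper's conventions ($G(v)$ a $G$-layer, so $E(W(G))=\bigcup_{v\in W}E(G(v))$ is precisely the set of those one-type edges) the formula $E(G\circ W)\setminus E(W(G))$ excludes them. Hence the intersection you actually computed is $E(G\circ W)$ with $W$ read as an edgeless graph, which \emph{contains} $E(W(G))$ rather than omitting it, and your closing sentence identifying ``the one-type and three-type edges over $W$'' with $E(G\circ W)\setminus E(W(G))$ contradicts the analysis immediately preceding it. These are arguably defects of the observation as printed (part $(ii)$ exposes the asymmetry: cross edges of $T_i\circ H$ are governed by edges of $T_i$, those of $G\circ T_i$ only by vertices of $T_i$), but a proof of the statement as written cannot silently absorb them; you should either exhibit a reading of the notation under which the formula holds or record the corrected identities $G\circ\bigcup_iT_i\supseteq\bigcup_i(G\circ T_i)$ and $E(G\circ T_i)\cap E(G\circ T_j)=E(G\circ W)$.
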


For the above observation, we give two examples.

\begin{example}
Let $G$ be a complete graph of order 4 and $H$ be an arbitrary graph. The structure of $G\circ (T_1\cup T_2)$ is shown as $F_a$ in Figure 2.
\end{example}

\begin{example}
Let $G$ be a path of length 2 and $H$ be a complete graph of order 4. The structure of $(T_1\cup T_2)\circ H$ is shown as $F_b$ in Figure 2.
\end{example}

\begin{figure}[h,t,b,p]
\begin{center}
\scalebox{0.8}[0.8]{\includegraphics{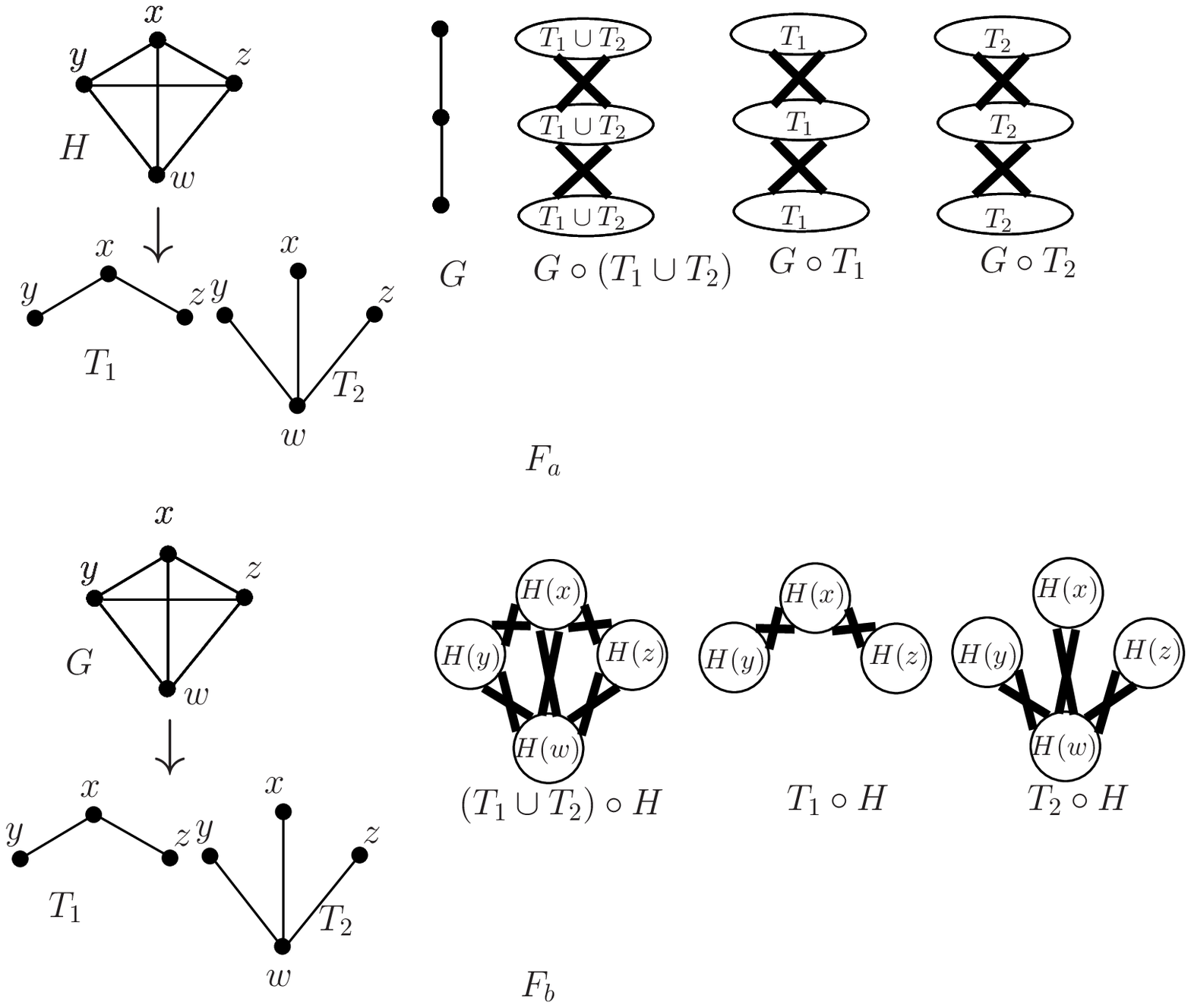}}\\[20pt]

Figure~2. The structures of $G\circ (T_1\cup T_2)$ and $(T_1\cup T_2)\circ H$.
\end{center}
\end{figure}

\section{Lower bound of $\lambda_3(G\circ H)$}
In this section, we give the lower bound of generalized 3-edge-connectivity of the lexicographic product of two graphs. Before proceeding, we give some notations and lemmas.

Set $V(G)=\{u_1,u_2,\cdots,u_{n_1}\}$, $V(H)=\{v_1,v_2,\cdots,v_{n_2}\}$ and set $\lambda_3(G)=\ell_1$, $\lambda_3(H)=\ell_2$ for simplicity. Let $S=\{x,y,z\}\subseteq V(G\circ H)$. In total, we construct our desired $S$-trees on two stages: $\ell_2$ edge-disjoint $S$-trees by one-type and two-type edges on Stage $\uppercase\expandafter{\romannumeral1}$ and $\ell_1n_2$ edge-disjoint $S$-trees by one-type and three-type edges on Stage $\uppercase\expandafter{\romannumeral2}$. If $H$ is disconnected, then $\lambda_3(H)=\ell_2=0$ as defined, thus we omit Stage $\uppercase\expandafter{\romannumeral1}$ immediately. Next we always assume $H$ is connected.

According to the position of $x$, $y$, $z$ in $G\circ H$, we give some lemmas as follows.

\begin{lem}\label{lem1}
If $x,y,z$ belong to the same $H(u_i)$, $1\leq i\leq n$, then there exist $\ell_2+\ell_1n_2$ edge-disjoint $S$-trees.
\end{lem}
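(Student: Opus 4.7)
The plan is to follow the two-stage construction outlined in the preamble to the section. Stage~I will build $\ell_2$ edge-disjoint $S$-trees using only two-type edges (which all sit inside the single $H$-layer containing $x,y,z$), and Stage~II will build $\ell_1 n_2$ edge-disjoint $S$-trees using only one-type and three-type edges. Since the three edge types form pairwise disjoint subsets of $E(G\circ H)$, the two stages automatically produce edge-disjoint families of trees.

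For Stage~I, since $H$ is connected with $\lambda_3(H)=\ell_2$ and the induced subgraph $H(u_i)\cong H$ already contains all of $x,y,z$, I would apply the definition of $\lambda_3$ inside $H(u_i)$ to obtain $\ell_2$ pairwise edge-disjoint $\{x,y,z\}$-trees $R_1,\ldots,R_{\ell_2}$; every edge of each $R_t$ is a two-type edge of $G\circ H$.

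For Stage~II, the key observation is that the three terminals $x=(u_i,v_a)$, $y=(u_i,v_b)$, $z=(u_i,v_c)$ share the common first coordinate $u_i$, so every vertex of the form $(u_\alpha,v_q)$ with $u_\alpha\in N_G(u_i)$ is simultaneously adjacent to all of $x,y,z$ via a non-two-type edge (one-type when $v_q$ equals the corresponding second coordinate, three-type otherwise). By Observation~\ref{lambdadelta}, $\ell_1=\lambda_3(G)\le\delta(G)\le d_G(u_i)$, so we may fix $\ell_1$ distinct neighbours $u_{\alpha_1},\ldots,u_{\alpha_{\ell_1}}$ of $u_i$ in $G$. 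For each $v_q\in V(H)$ and each $p\in\{1,\ldots,\ell_1\}$, I would take $T_{q,p}$ to be the three-edge star centred at $(u_{\alpha_p},v_q)$ with leaves $x,y,z$; this is a valid $S$-tree of type~II, and there are exactly $\ell_1 n_2$ such stars. They are pairwise edge-disjoint because every edge of $T_{q,p}$ is incident to its centre $(u_{\alpha_p},v_q)$, and the $\ell_1 n_2$ centres are pairwise distinct (distinct $p$ forces distinct $u_{\alpha_p}$, and distinct $q$ forces distinct $v_q$), so no edge can sit in two different stars.

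Because the three terminals already share the coordinate $u_i$, the construction collapses to a clean packing of three-edge stars and I do not expect any substantive obstacle. The only technical point is ensuring that $u_i$ has at least $\ell_1$ neighbours in $G$, which follows immediately from $\lambda_3(G)\le\delta(G)$. Combining the $\ell_2$ trees from Stage~I with the $\ell_1 n_2$ stars from Stage~II then produces the required $\ell_2+\ell_1 n_2$ edge-disjoint $S$-trees.
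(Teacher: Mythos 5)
Your construction is exactly the paper's: Stage~I packs $\ell_2$ trees inside the layer $H(u_i)$, and Stage~II uses $\ell_1$ neighbours $\beta_1,\ldots,\beta_{\ell_1}$ of $u_i$ (guaranteed by $\lambda_3(G)\le\delta(G)$) to form the $\ell_1 n_2$ three-edge stars $x(\beta_i,v_j)\cup y(\beta_i,v_j)\cup z(\beta_i,v_j)$. The proof is correct and follows essentially the same approach as the paper, with your explicit edge-disjointness argument replacing the paper's appeal to Observation~\ref{obs2}.
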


\begin{proof}
Without loss of generality, assume $x,y,z\in H(u_1)$ and $x=(u_1,v_1)$, $y=(u_1,v_2)$, $z=(u_1,v_3)$. On Stage $\uppercase\expandafter{\romannumeral1}$, there are $\ell_2$ edge-disjoint $S$-trees in $H(u_1)$, since $\lambda_3(H)=\ell_2$. On Stage $\uppercase\expandafter{\romannumeral2}$, since $\lambda_3(G)=\ell_1$ and Observation \ref{lambdadelta}, there are $\ell_1$ neighbors of $u_1$ in $G$, say $\beta_1,\beta_2,\cdots,\beta_{\ell_1}$. Thus $T^*_{ij}=x(\beta_{i},v_j)\cup y(\beta_{i},v_j)\cup z(\beta_{i},v_j)$($1\leq i\leq \ell_1$ and $1\leq j\leq n_2$) are $\ell_1n_2$ $S$-trees. By Observation \ref{obs2}, it is easy to see that these $\ell_2+\ell_1n_2$ $S$-trees are edge-disjoint, as desired.
\end{proof}

\begin{lem}\label{lem2}
If only two of $\{x,y,z\}$ belong to the same $H(u_i)$, $1\leq i\leq n$, then there exist $\ell_2+\ell_1n_2$ edge-disjoint $S$-trees.
\end{lem}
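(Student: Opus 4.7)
The plan is to mirror the two-stage construction of Lemma~\ref{lem1}, adapted to the fact that now $z$ sits in a different $H$-layer than $x$ and $y$. Write $x=(u_1,v_1)$, $y=(u_1,v_2)$, $z=(u_2,v_3)$ with $u_1\neq u_2$. The subgraph of $G\circ H$ spanned by one-type and two-type edges is the Cartesian product $G\square H$, while the subgraph spanned by one-type and three-type edges is a much larger ``bipartite-like'' structure; I place the Stage~I trees inside the former and the Stage~II trees inside the latter, so that the two stages automatically share no two-type and no three-type edges.

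On Stage~I the goal is $\ell_2$ edge-disjoint $S$-trees using only one-type and two-type edges. First I invoke $\lambda_3(H)=\ell_2$ to pack $\ell_2$ edge-disjoint $\{v_1,v_2,v_3\}$-trees $\tilde T_1,\dots,\tilde T_{\ell_2}$ in $H$, and I lift each $\tilde T_k$ to a tree $\hat T_k$ in $H(u_1)$ spanning $\{x,y,(u_1,v_3)\}$. To extend each $\hat T_k$ into a tree containing $z$, I choose a neighbor $v^{(k)}$ of $v_3$ in $H$, with the $v^{(k)}$'s pairwise distinct (possible because $d_H(v_3)\ge\lambda(H)\ge\ell_2$) and avoiding the at most two edges that $\tilde T_k$ already uses at $v_3$; I then append to $\hat T_k$ the bridge $(u_1,v_3)(u_1,v^{(k)})$ (two-type), a $u_1u_2$-path inside the $G$-layer $G(v^{(k)})$ (one-type), and the edge $(u_2,v^{(k)})z$ (two-type). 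Different $k$'s use different $G$-layers, so all appended segments are pairwise edge-disjoint.

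On Stage~II the goal is $\ell_1 n_2$ edge-disjoint $S$-trees using only one-type and three-type edges. Menger's theorem and $\lambda(u_1,u_2)\ge\lambda_3(G)=\ell_1$ provide $\ell_1$ pairwise edge-disjoint $u_1u_2$-paths $P_1,\dots,P_{\ell_1}$ in $G$. For each $P_i=u_1w_1^{(i)}\cdots w_{k_i}^{(i)}u_2$ of length at least two and each $v_j\in V(H)$, I form the spider $T_{i,j}$ whose two front legs are $x(w_1^{(i)},v_j)$ and $y(w_1^{(i)},v_j)$, whose spine is the one-type walk $(w_1^{(i)},v_j)\cdots(w_{k_i}^{(i)},v_j)$ inside $G(v_j)$, and whose back leg is the edge $(w_{k_i}^{(i)},v_j)z$. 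For the at most one index $i$ at which $P_i$ is the direct edge $u_1u_2$, I replace the spider by a four-edge hub-tree passing through $(u_1,v_j)$ so as to avoid two-type edges, absorbing the degenerate cases $v_j\in\{v_1,v_2,v_3\}$ by local swaps within the $n_2^2$-edge bipartite bundle between $H(u_1)$ and $H(u_2)$.

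Edge-disjointness within Stage~II is transparent by projecting onto $G$ (distinct $P_i$'s) and onto $H$ (distinct $v_j$'s); edge-disjointness within Stage~I follows from the edge-disjointness of the $\tilde T_k$'s combined with the distinctness of the $G$-layers $G(v^{(k)})$. The hard part will be Stage~I together with the cross-stage check on one-type edges: I must ensure that the $u_1u_2$-path chosen inside $G(v^{(k)})$ for Stage~I does not collide with the one-type edges of the spines of Stage~II that happen to live in the same $G$-layer $G(v^{(k)})$, which forces a careful bookkeeping of which columns are reserved for Stage~I versus Stage~II; everything else is routine.
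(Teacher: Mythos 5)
Your two-stage skeleton is the same as the paper's, but both stages contain genuine gaps, not just unfinished bookkeeping. In Stage~I you attach $z$ to the lifted tree $\hat T_k$ via a fresh two-type edge $(u_1,v_3)(u_1,v^{(k)})$ with $v_3v^{(k)}\notin E(\tilde T_k)$. This fails on two counts. First, $d_H(v_3)\ge \ell_2$ does not supply enough fresh neighbors: if $d_H(v_3)=\ell_2$, every edge of $H$ at $v_3$ may already belong to some $\tilde T_{k'}$, so whatever bridge $v_3v^{(k)}$ you choose collides with an edge of a \emph{different} Stage~I tree $\hat T_{k'}$ (you only exclude the edges of $\tilde T_k$ itself), and for $\ell_2=1$ no admissible neighbor exists at all. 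The correct move --- the one the paper makes --- is to add no new edge at $v_3$: let $\alpha_k$ be the neighbor of $v_3$ \emph{inside} $\tilde T_k$ (these can be taken distinct across $k$ because the trees are edge-disjoint) and leave $H(u_1)$ from the vertex $(u_1,\alpha_k)$, which already lies on $\hat T_k$.

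Second, the cross-stage clash that you defer as ``the hard part \dots\ everything else is routine'' is the crux of the lemma, and with your design it cannot be repaired: your Stage~II spines are straight one-type copies of the paths $P_i$ placed in every $G$-layer $G(v_j)$, while Stage~I also needs a one-type $u_1u_2$-path in each layer $G(v^{(k)})$. Since the $P_i$ may exhaust the edge-connectivity between $u_1$ and $u_2$ in $G$, the Stage~I path in $G(v^{(k)})$ is forced to reuse edges of some $P_i$, i.e.\ of the Stage~II spider living in that very layer. The paper avoids this by reserving a single path (the longest one, $Q_{\ell_1}$) for all Stage~I trees and, more importantly, by building Stage~II not from straight one-type spines but from three-type zigzag linkages between consecutive $H$-layers along each $Q_i$, together with a few one-type edges at $x$, $y$, $z$ that Stage~I is explicitly arranged to leave untouched; the two stages then use essentially disjoint edge types. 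You also omit the case analysis the paper cannot do without: the subcase $v_3\in\{v_1,v_2\}$ (where the $H$-side packing degenerates to $v_1v_2$-paths), the subcases $v_1v_2\in E(H)$ or $u_1u_2\in E(G)$ (which need Observation~\ref{k+1} to find a spare neighbor), and the subcases in which the $S'$-trees of $H$ use edges inside $H[\{v_1,v_2,v_3\}]$ (handled via Proposition~\ref{k-2} and the ad hoc constructions of Figures~6--7). Your appeal to ``local swaps'' does not address any of these.
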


\begin{proof}
Suppose $x,y\in H(u_1)$, $z\in H(u_2)$. Let $x''$, $y''$ be the vertices in $H(u_2)$ corresponding to $x$, $y$, respectively, and $z'$ be the vertex in $H(u_1)$ corresponding to $z$. Consider the following two cases.

{\bf Case 1.}~~ $z'\in \{x,y\}$.

Without loss of generality, assume $z'=x$ and $x=(u_1,v_1)$, $y=(u_1,v_2)$, $z=(u_2,v_1)$.

Since $\lambda(H)\geq \lambda_3(H)=\ell_2$, there are $\ell_2$ edge-disjoint $v_1v_2$-paths $P_1,P_2,\cdots, P_{\ell_2}$ in $H$ such that $\ell(P_1)\leq \ell(P_2)\leq \cdots \leq \ell(P_{\ell_2})$. For $1\leq i\leq \ell_2$, denote the neighbor of $v_1$ in $P_i$ by $\alpha_i$. Notice that $\alpha_p\neq \alpha_q$ for $p\neq q$, $1\leq p, q\leq \ell_2$.

Since $\lambda(G)\geq \lambda_3(G)=\ell_1$, there exist $\ell_1$ edge-disjoint $u_1u_2$-paths $Q_1,Q_2,\cdots, Q_{\ell_1}$ in $G$ such that $\ell(Q_1)\leq \ell(Q_2)\leq \cdots \leq \ell(Q_{\ell_1})$. For each $i$ with $1\leq i\leq \ell_1$, set $Q_i=u_1\beta_{i,1}\beta_{i,2}\cdots\beta_{i,t_i-1}u_2$ and $\ell(Q_i)=t_i$. Also, note that $\beta_{p,1}\neq \beta_{q,1}$ for $p\neq q$, $1\leq p,q\leq \ell_1$.

Firstly, we come to Stage $\uppercase\expandafter{\romannumeral1}$. Choose the longest $u_1u_2$-path $Q_{\ell_1}$ and construct our desired $\ell_2$ $S$-trees according to $Q_{\ell_1}$. If $v_1$ and $v_2$ are not adjacent in $H$, then let $T^*_i=P_i(u_1)\cup Q_{\ell_1}(\alpha_i)\cup z(u_2,\alpha_i)$ for $1\leq i\leq \ell_2$, where $P_i(u_1)$ is the path in $H(u_1)$ corresponding to $P_i$, $Q_{\ell_1}(\alpha_i)$ is the path in $G(\alpha_i)$ corresponding to $Q_{\ell_1}$.

So suppose $v_1$ and $v_2$ are adjacent in $H$, that is, $P_1=v_1v_2$ and $(u_1,\alpha_1)=y$. Since $\lambda_3(H)=\ell_2$ and Observation \ref{k+1}, it follows that, $d_{H}(v_1)\geq \ell_2+1$ or $d_{H}(v_2)\geq \ell_2+1$. Let $d_{H}(v_1)\geq \ell_2+1$ (the case that $d_{H}(v_2)\geq \ell_2+1$ can be proved similarly). For $P_1$, choose another neighbor $\alpha_{\ell_2+1}$ of $v_1$ in $H$, which is not $\alpha_i$ and $v_2$ ($2\leq i\leq \ell_2$). Let $T^*_1=xy\cup x(u_1,\alpha_{\ell_2+1})\cup Q_{\ell_1}(\alpha_{\ell_2+1})\cup z(u_2,\alpha_{\ell_2+1})$, where $Q_{\ell_1}(\alpha_{\ell_2+1})$ is the path in $G(\alpha_{\ell_2+1})$ corresponding to $Q_{\ell_1}$. For $P_i$ with $2\leq i\leq \ell_2$, set $T^*_i=P_i(u_1)\cup Q_{\ell_1}(\alpha_i)\cup z(u_2,\alpha_i)$, where $P_i(u_1)$ is the path in $H(u_1)$ corresponding to $P_i$, $Q_{\ell_1}(\alpha_i)$ is the path in $G(\alpha_i)$ corresponding to $Q_{\ell_1}$. Thus, by $(i)$ of Observation \ref{obs2}, these $\ell_2$ $S$-trees are edge-disjoint.

Note that, on Stage $\uppercase\expandafter{\romannumeral1}$, if $v_1$ and $v_2$ are adjacent in $H$, then we choose another neighbor of $v_1$ rather than $v_2$ (or another neighbor of $v_2$ rather than $v_1$) for the path $P_1$. The aim is to make sure that the one-type edges incident to $x$ in $G(v_1)$ and to $y$ in $G(v_2)$ corresponding to each $Q_i$ remain to be used on Stage $\uppercase\expandafter{\romannumeral2}$.

On Stage $\uppercase\expandafter{\romannumeral2}$, we construct $\ell_1n_2$ $S$-trees corresponding to the length of $Q_i$ in non-decreasing order. We distinguish two subcases by the length of $Q_1$.

{\bf Subcase 1.1.}~~$t_1\geq 2$.

For $Q_1$, we find $n_2$ internally disjoint $xy$-paths $A_1,A_2,\cdots, A_{n_2}$ in $K_{u_1,\beta_{1,1}}$ by using the remaining edges after Stage $\uppercase\expandafter{\romannumeral1}$, and get a $V(H(\beta_{1,1}))V(H(\beta_{1,t_1-1}))$-linkage $B_1,B_2,\cdots, B_{n_2}$ by the three-type edges according to $\beta_{1,1}Q_1\beta_{1,t_1-1}$. Thus $T^*_i=A_i\cup B_i\cup (\beta_{1,t_1-1},v_i)z$ are $n_2$ edge-disjoint $S$-trees, where $1\leq i\leq n_2$ and the subscript $i$ of $v_i$ is expressed module $n_2$ as one of $1,2,\cdots,n_2$.

\begin{figure}[h,t,b,p]
\begin{center}
\scalebox{0.8}[0.8]{\includegraphics{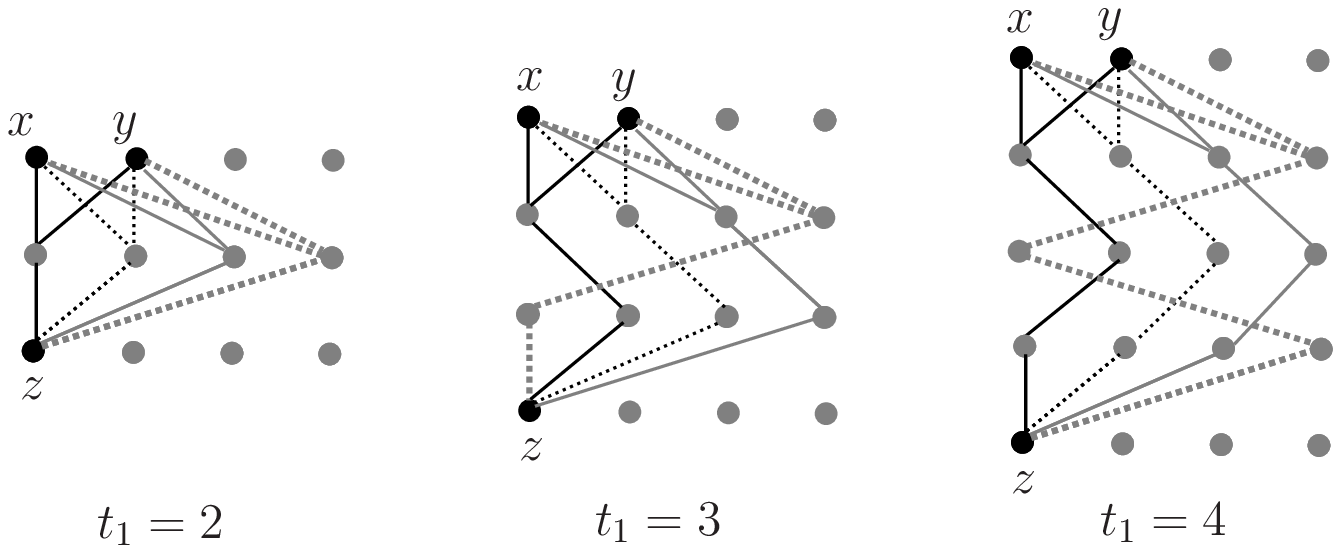}}\\[20pt]
Figure~3. The $4$ edge-disjoint $S$-trees corresponding to $Q_1$ when $n_2=4$\\
~~~~(The edges of a tree are shown by the same type of lines).
\end{center}
\end{figure}

Indeed, this can always be done. Set $A_i=x(\beta_{1,1},v_i)y$ for $1\leq i\leq n_2$. If $t_1=2$, then $B_i=\emptyset$. If $t_1\geq3$, that is, $d_{Q_1}(\beta_{1,1},\beta_{1,t_1-1})=t_1-2\geq 1$, then $B_i$ has the structure as follows. If $t_1$ is even, then let $B_i=(\beta_{1,1},v_i)(\beta_{1,2},v_{i+1})(\beta_{1,3},v_i)(\beta_{1,4},v_{i+1})\cdots(\beta_{1,t_1-1},v_i)$ and $T^*_i=A_i\cup B_i\cup (\beta_{1,t_1-1},v_i)z$. If $t_1$ is odd, then let $B_i=(\beta_{1,1},v_i)(\beta_{1,2},v_{i+1})(\beta_{1,3},v_i)\\(\beta_{1,4},v_{i+1})\cdots (\beta_{1,t_1-1},v_{i+1})$ and $T^*_i=A_i\cup B_i\cup (\beta_{1,t_1-1},v_{i+1})z$. Take for example, let $n_2=4$, then 4 edge-disjoint $S$-trees are shown in Figure 3 when $t_1=2$, $t_1=3$ and $t_1=4$, respectively.

Similar to these $n_2$ $S$-trees corresponding to $Q_1$, we continue to construct $n_2$ $S$-trees corresponding to $Q_i$ by the edges in accord with $E(Q_i)$, since $\ell(Q_i)\geq2$ for each $i$, where $2\leq i\leq \ell_1$.

{\bf Subcase 1.2.}~~ $t_1=1$, that is, $Q_1=u_1u_2$.

Since $\lambda_3(G)=\ell_1$, it follows by Observation \ref{k+1} that $d_{G}(u_1)\geq \ell_1+1$ or $d_{G}(u_2)\geq \ell_1+1$.

If $d_{G}(u_1)\geq \ell_1+1$, then denote another neighbor of $u_1$ in $G$ by $\beta_{\ell_1+1,1}$ except $u_2$ and $\beta_{i,1}$ ($2\leq i\leq \ell_1$). For $Q_1$, we find out $n_2$ edge-disjoint $S$-trees as follows. Let $T^*_1=(\beta_{\ell_1+1,1},v_1)x\cup (\beta_{\ell_1+1,1},v_1)y\cup xz$, $T^*_2=(\beta_{\ell_1+1,1},v_2)x\cup (\beta_{\ell_1+1,1},v_2)y\cup yz$, $T^*_i=(u_2,v_i)x\cup (u_2,v_i)y\cup (u_2,v_i)(u_1,v_{i+1})\cup (u_1,v_{i+1})z$ for $3\leq i\leq n_2-1$, $T^*_{n_2}=(u_2,v_{n_2})x\cup (u_2,v_{n_2})y\cup (u_2,v_{n_2})(u_1,v_3)\cup (u_1,v_3)z$. See Figure 4($a$).

\begin{figure}[h,t,b,p]
\begin{center}
\scalebox{0.9}[0.9]{\includegraphics{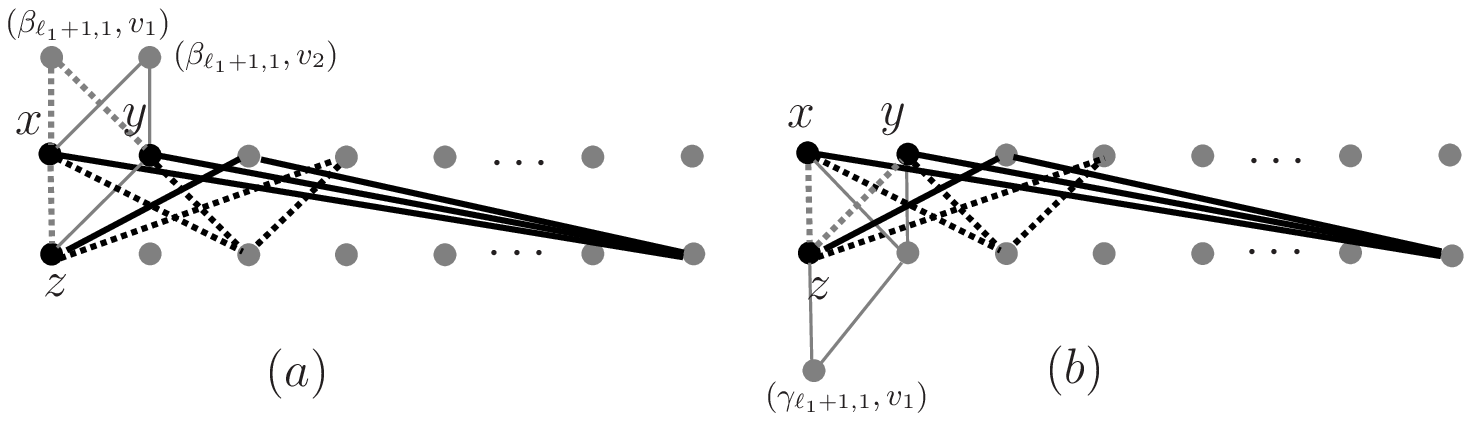}}\\[20pt]

Figure~4. The $n_2$ edge-disjoint $S$-trees for $Q_1$ corresponding to $Q_1=u_1u_2$\\~~~~ (The edges of a tree are shown by the same type of lines).
\end{center}
\end{figure}

If $d_{G}(u_2)\geq \ell_1+1$, then denote another neighbor of $u_2$ in $G$ by $\gamma_{\ell_1+1}$ except $u_1$ and $\beta_{i,t_i-1}$ ($2\leq i\leq \ell_1$). For $Q_1$, set $T^*_1=xz\cup zy$, $T^*_2=xy''\cup y''y\cup (\gamma_{\ell_1+1},v_1)y''\cup (\gamma_{\ell_1+1},v_1)z$, $T^*_i=(u_2,v_i)x\cup (u_2,v_i)y\cup (u_2,v_i)(u_1,v_{i+1})\cup (u_1,v_{i+1})z$ for $3\leq i\leq n_2-1$, $T^*_{n_2}=(u_2,v_{n_2})x\cup (u_2,v_{n_2})y\cup (u_2,v_{n_2})(u_1,v_3)\cup (u_1,v_3)z$. See Figure 4($b$).

Corresponding to $Q_i$ with $2\leq i\leq \ell_1$, construct $n_2$ edge-disjoint $S$-trees similar to that in Subcase 1.1 of Stage $\uppercase\expandafter{\romannumeral2}$.

Since the edges on Stage $\uppercase\expandafter{\romannumeral2}$ are of three-type corresponding to each $Q_i$ besides three one-type edges incident to $x$, $y$ and $z$ that are not used on Stage $\uppercase\expandafter{\romannumeral1}$, it follows that the edges used on Stage $\uppercase\expandafter{\romannumeral2}$ are different from those used on Stage $\uppercase\expandafter{\romannumeral1}$. And by $(ii)$ of Observation \ref{obs2}, these $\ell_1n_2$ $S$-trees on Stage $\uppercase\expandafter{\romannumeral2}$ are edge-disjoint, as desired.

{\bf Case 2.}~~ $z'\notin \{x,y\}$.

Assume $x=(u_1,v_1)$, $y=(u_1,v_2)$, $z=(u_2,v_3)$. Let $S'=\{v_1,v_2,v_3\}$, $S''=\{x,y,z'\}$.

Since $\lambda(G)\geq \lambda_3(G)=\ell_1$, there exist $\ell_1$ edge-disjoint $u_1u_2$-paths $Q_1,Q_2,\cdots, Q_{\ell_1}$ in $G$ such that $\ell(Q_1)\leq \ell(Q_2)\leq \cdots \leq \ell(Q_{\ell_1})$.

Since $\lambda_3(H)=\ell_2$, there are $\ell_2$ edge-disjoint
$S'$-trees $T_1,T_2,\cdots, T_{\ell_2}$ in $H$. Recall that $0\leq
|E(T_i)\cap E(G[S'])|\leq 2$. By Proposition \ref{k-2}, suppose
$E(T_i)\cap E(G[S'])=\emptyset$ for $3\leq i\leq \ell_2$. According
to whether $T_1$ and $T_2$ have edges in $E(G[S'])$ or not, $T_1$
and $T_2$ have one of the following structures.

{\bf Subcase 2.1.} $E(T_1)\cap E(G[S'])=\emptyset$ and $E(T_2)\cap E(G[S'])=\emptyset$.

For $1\leq i\leq \ell_2$, denote the neighbor of $v_3$ in $T_i$ by $\alpha_i$.

On Stage $\uppercase\expandafter{\romannumeral1}$, let $T^*_i=T_i(u_1)\cup Q_{\ell_1}(\alpha_i)\cup z(u_2,\alpha_i)$ ( where $1\leq i\leq \ell_2$, and $T_i(u_1)$ is the path in $H(u_1)$ corresponding to $T_i$, $Q_{\ell_1}(\alpha_i)$ is the path in $G(\alpha_i)$ corresponding to $Q_{\ell_1}$).

On Stage $\uppercase\expandafter{\romannumeral2}$, if $\ell(Q_i)\geq 2$ for each $i$ with $1\leq i\leq \ell_1$, construct $n_2$ $S$-trees similar to Case 1; otherwise $\ell(Q_1)=1$, then either $u_1$ or $u_2$ has a neighbor which is not on each $u_1u_2$-paths $Q_i$ in $G(v_1)$. Then $n_2$ $S$-trees corresponding to $Q_1$ are shown in Figure 5 and construct $n_2$ $S$-trees similar to Case 1 for each $i$ with $2\leq i\leq \ell_1$.

\begin{figure}[h,t,b,p]
\begin{center}
\scalebox{0.9}[0.9]{\includegraphics{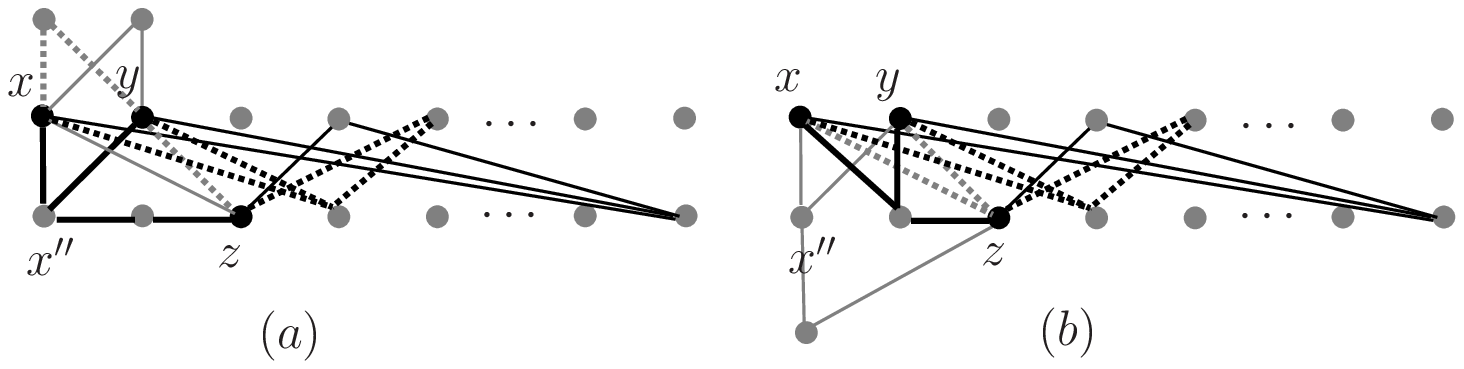}}\\[20pt]

Figure~5. The $n_2$ edge-disjoint $S$-trees on Stage $\uppercase\expandafter{\romannumeral2}$ for $Q_1$\\~~~~(the edges of a tree are shown by the same type of lines).
\end{center}
\end{figure}

\begin{figure}[h,t,b,p]
\begin{center}
\scalebox{0.7}[0.7]{\includegraphics{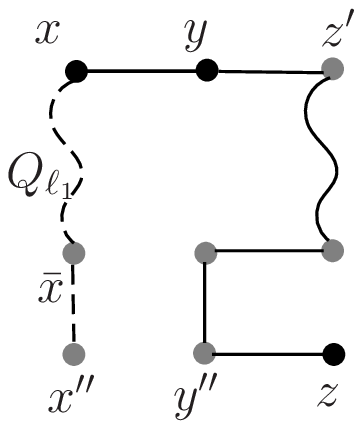}}\\[20pt]

Figure~6. The solid lines stand for the edges of the $S$-tree.
\end{center}
\end{figure}

{\bf Subcase 2.2.} $E(T_1)\cap E(G[S'])\neq\emptyset$ and $E(T_2)\cap E(G[S'])=\emptyset$.

Suppose $|E(T_1)\cap E(G[S'])|=1$ and $E(T_2)\cap E(G[S'])=\emptyset$. Without loss of generality, suppose $E(T_1)\cap E(G[S'])=v_1v_2$. For $1\leq i\leq \ell_2$, denote the neighbor of $v_3$ in $T_i$ by $\alpha_i$. Construct $\ell_2+\ell_1n_2$ $S$-trees similar to Subcase 2.1 by making use of $\alpha_i$. It remains to consider $|E(T_1)\cap E(G[S'])|=2$ and $E(T_2)\cap E(G[S'])=\emptyset$. Without loss of generality, suppose $E(T_1)\cap E(G[S])=\{v_1v_2,v_2v_3\}$. For $T_1$, if $d_{Q_{\ell_1}}(u_1,u_2)\geq 2$, then $T^*_1$ has the structure as shown in Figure 6, where $\bar{x}$ is the neighbor of $x''$ in $Q_{\ell_1}(v_1)$; if $d_{Q_{\ell_1}}(u_1,u_2)=1$, set $T^*_1=xyz'z$. Construct other $\ell_2+\ell_1n_2-1$ $S$-trees similar to Subcase 2.1. Thus there exist $\ell_2+\ell_1n_2$ $S$-trees.

\begin{figure}[h,t,b,p]
\begin{center}
\scalebox{0.7}[0.7]{\includegraphics{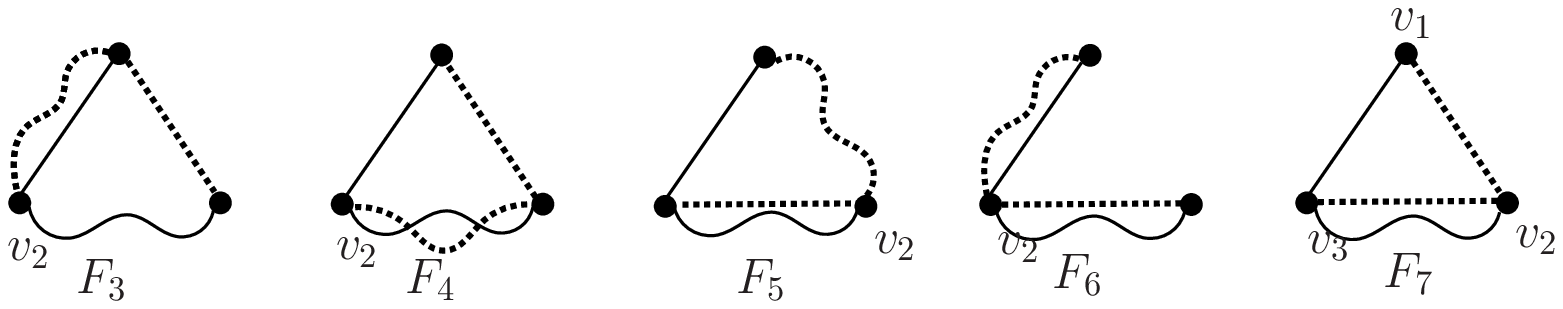}}\\[20pt]

Figure~7. Two $S'$-trees of type
$\uppercase\expandafter{\romannumeral1}$ in Case 2 of Lemma
\ref{lem2}.
\end{center}
\end{figure}

{\bf Subcase 2.3.} $E(T_1)\cap E(G[S'])\neq\emptyset$ and $E(T_2)\cap E(G[S'])\neq\emptyset$.

Without loss of generality, suppose $|E(T_2)\cap E(G[S])|=1$. If $|E(T_1)\cap E(G[S'])|=1$, then we may assume that the trees $T_1$ and $T_2$ have one of the structures  $F_3,F_4,F_5,F_6$ as shown in Figure 7. For $1\leq i\leq
\ell_2$, denote the neighbor of $v_2$ in $T_i\setminus \{v_1,v_3\}$
by $\alpha_i$. Construct $\ell_2+\ell_1n_2$ $S$-trees similar to
Subcase 2.1.

If $|E(T_1)\cap E(G[S'])|=2$, then the trees $T_1$ and $T_2$ have the structure $F_7$ as
shown in Figure 7, where $T_1$ is shown by dotted lines. Construct $T^*_1$ as shown in Figure 6. For $2\leq i\leq \ell_2$, denote the neighbor of $v_2$ in $T_i\setminus \{v_1,v_3\}$ by $\alpha_i$. Construct $\ell_2+\ell_1n_2-1$ $S$-trees similar to Subcase 2.1. Thus, there exist $\ell_2+\ell_1n_2$ $S$-trees.

By Observation \ref{obs2}, these $\ell_2+\ell_1n_2$ $S$-trees are edge-disjoint in each case, as desired.
\end{proof}

\begin{lem}\label{lem9}
If $x,y,z$ belong to distinct $H(u_i)$s, then there exist $\ell_2+\ell_1n_2$ edge-disjoint $S$-trees.
\end{lem}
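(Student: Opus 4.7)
The plan is to follow the two-stage strategy of Lemmas~\ref{lem1} and~\ref{lem2}: produce $\ell_2$ Stage~I trees from edge-disjoint $S'$-trees of $H$ together with one- and two-type edges, and then $\ell_1 n_2$ Stage~II trees from the $\ell_1$ edge-disjoint $\{u_1,u_2,u_3\}$-trees of $G$ together with (mostly) three-type edges. Write $x=(u_1,v_1)$, $y=(u_2,v_2)$, $z=(u_3,v_3)$; the $u_i$'s are distinct by hypothesis, but the $v_i$'s may coincide, so the argument splits according to how many of $v_1,v_2,v_3$ are equal.

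In the generic subcase where $v_1,v_2,v_3$ are pairwise distinct, put $S'=\{v_1,v_2,v_3\}$ and take $\ell_2$ edge-disjoint $S'$-trees $T_1,\dots,T_{\ell_2}$ in $H$; by Proposition~\ref{k-2} we may assume all but at most two of them miss $E(H[S'])$. For each $T_i$, let $\alpha_i$ be the neighbor of $v_1$ along the $v_1v_2$-path of $T_i$ and $\beta_i$ the neighbor of $v_1$ along the $v_1v_3$-path of $T_i$; edge-disjointness of the $T_i$'s forces $\alpha_1,\dots,\alpha_{\ell_2}$ pairwise distinct, $\beta_1,\dots,\beta_{\ell_2}$ pairwise distinct, and $\alpha_i\ne\beta_j$ whenever $i\ne j$. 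Fix one $\{u_1,u_2,u_3\}$-tree $R$ in $G$ (which exists because $\lambda_3(G)=\ell_1\ge 1$) and build each Stage~I tree $T^*_i$ by gluing together: the edge $x(u_1,\alpha_i)$ (and $x(u_1,\beta_i)$ when $\alpha_i\ne\beta_i$) inside $H(u_1)$; the lift of the $u_1u_2$-subpath of $R$ inside $G(\alpha_i)$, ending at $(u_2,\alpha_i)$; the lift of the $u_1u_3$-subpath of $R$ inside $G(\beta_i)$, ending at $(u_3,\beta_i)$; the lift of the $\alpha_i$-to-$v_2$ tail of $T_i$ inside $H(u_2)$, reaching $y$; and the lift of the $\beta_i$-to-$v_3$ tail of $T_i$ inside $H(u_3)$, reaching $z$. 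Every edge of $T^*_i$ sits inside $H(u_1)\cup H(u_2)\cup H(u_3)\cup G(\alpha_i)\cup G(\beta_i)$, and the distinctness of the $\alpha_i$'s and $\beta_i$'s combined with the edge-disjointness of the $T_i$'s in $H$ make $T^*_1,\dots,T^*_{\ell_2}$ pairwise edge-disjoint.

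For Stage~II, use $\lambda_3(G)=\ell_1$ to obtain $\ell_1$ edge-disjoint $\{u_1,u_2,u_3\}$-trees $R_1,\dots,R_{\ell_1}$ in $G$ ordered by non-decreasing size, and for each $R_j$ build $n_2$ edge-disjoint $S$-trees along $R_j$ using three-type edges in the spirit of Subcases~1.1 and~1.2 of Lemma~\ref{lem2}, with Observation~\ref{k+1} supplying an extra neighbor of a low-degree endpoint whenever some arm of $R_j$ has length one. The Stage~II one-type edges live in the layers $G(v_1),G(v_2),G(v_3)$, whereas the Stage~I one-type edges live in layers $G(\alpha_i),G(\beta_i)$; for the $\ell_2-2$ $T_i$'s chosen by Proposition~\ref{k-2} we have $\alpha_i,\beta_i\notin\{v_1,v_2,v_3\}$, and the at most two exceptional $T_i$'s are rerouted at their $S'$-edges in the spirit of Subcases~2.2 and~2.3 of Lemma~\ref{lem2}, so Stages~I and~II remain edge-disjoint.

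It remains to treat the degenerate subcases. If exactly two $v_i$'s coincide, say $v_1=v_2\ne v_3$, I replace $S'$-trees by $\ell_2$ edge-disjoint $v_1v_3$-paths in $H$ (which exist since $\lambda(H)\ge\lambda_3(H)=\ell_2$ by Proposition~\ref{lambdalambda}) and run the same gluing template with one of the $H(u_r)$-arms collapsed. If $v_1=v_2=v_3=v$, then $x,y,z\in G(v)$, and the $\ell_1$ lifts $R_j(v)\subseteq G(v)$ already supply $\ell_1$ $S$-trees; the remaining $\ell_1(n_2-1)$ Stage~II trees arise by routing each $R_j$ through $H$-coordinates other than $v$ via three-type edges as in Subcase~1.1 of Lemma~\ref{lem2}, while for Stage~I I choose $\ell_2$ distinct neighbors $\alpha_1,\dots,\alpha_{\ell_2}$ of $v$ in $H$ (available because $\delta(H)\ge\lambda_3(H)=\ell_2$ by Observation~\ref{lambdadelta}) and take $T^*_i$ to be the union of the two-type edges $(u_r,v)(u_r,\alpha_i)$ for $r=1,2,3$ with a lift of the fixed $\{u_1,u_2,u_3\}$-tree of $G$ inside $G(\alpha_i)$. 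The main obstacle throughout is the edge-disjointness bookkeeping between Stages~I and~II in all three subcases, which is resolved by Proposition~\ref{k-2} together with the rerouting techniques of Lemma~\ref{lem2}.
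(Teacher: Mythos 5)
Your proposal follows essentially the same route as the paper: the same case split according to how many of the $H$-coordinates $v_1,v_2,v_3$ coincide, the same Stage~I construction (lifting $\ell_2$ edge-disjoint $S'$-trees, $v_1v_3$-paths, or neighbors of $v$ in $H$ into the layers $H(u_1),H(u_2),H(u_3)$ and tying them together through a fixed $\{u_1,u_2,u_3\}$-tree of $G$ lifted into $G$-layers indexed by the $\alpha_i$'s), and the same Stage~II construction of $n_2$ trees per $\{u_1,u_2,u_3\}$-tree of $G$ via three-type edges, with Proposition~\ref{k-2} and Observation~\ref{k+1} handling the exceptional short trees exactly as the paper does. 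The only cosmetic difference is your use of two connecting layers $G(\alpha_i)$, $G(\beta_i)$ per Stage~I tree where the paper routes everything through a single layer $G(\alpha_i)$; both variants are sound.
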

\begin{proof}
Assume $x\in H(u_1)$, $y\in H(u_2)$, $z\in H(u_3)$. Let $y'$, $z'$ be the vertex corresponding to $y$, $z$ in $H(u_1)$, $x''$, $z''$ be the vertex corresponding to $x$, $z$ in $H(u_2)$, $x'''$, $y'''$ be the vertex corresponding to $x$, $y$ in $H(u_3)$. We distinguish the following three cases.

{\bf Case 1.}~~ $x$, $y'$, $z'$ are the same vertex in $H(u_1)$.

We may assume $x=(u_1,v_1)$, $y=(u_2,v_1)$, $z=(u_3,v_1)$. Since $\lambda_3(H)=\ell_2$, there are $\ell_2$ neighbors of $v_1$ in $H$, say $\alpha_1,\alpha_2,\cdots,\alpha_{\ell_2}$. Since $\lambda_3(G)=\ell_1$, there are $\ell_1$ edge-disjoint $\{u_1,u_2,u_3\}$-trees $T_1,T_2,\cdots, T_{\ell_1}$ in $G$. For a tree $T_i$ in $G$, set by $T_i(\alpha_j)$ the tree in $G(\alpha_j)$ corresponding to $T_i$ for $1\leq i\leq \ell_1$, $1\leq j\leq \ell_2$.

On Stage $\uppercase\expandafter{\romannumeral1}$, for $1\leq j\leq \ell_2$, set $T_j^{*}=T_1(\alpha_j)\cup x(u_1,\alpha_j)\cup y(u_2,\alpha_j)\cup z(u_3,\alpha_j)$.

On Stage $\uppercase\expandafter{\romannumeral2}$, for each $j$ with $1\leq j\leq \ell_1$, if $T_j$ is of type $\uppercase\expandafter{\romannumeral1}$, we may assume $d_{T_j}(u_2)=2$. Denote the neighbor of $u_1$, $u_3$ in $T_j$ by $\eta_{j}$, $\gamma_{j}$ and the neighbor of $u_2$ by $\beta_{j}$, $\bar{\beta_{j}}$ ($\beta_{j}$ is nearer to $u_1$ than $\bar{\beta_{j}}$), where $\beta_{j}$, $\eta_{j}$ may be the same vertex, $\bar{\beta_{j}}$, $\gamma_{j}$ may be the same vertex. Corresponding to $u_1T_ju_2$ and $u_2T_ju_3$, we find $n_2$ edge-disjoint $xy$-paths $A=\{A_1,\cdots,A_{n_2}\}$ and edge-disjoint $yz$-paths $B=\{B_1,\cdots,B_{n_2}\}$ respectively. Then $T^*_{ij}=A_i\cup B_i$ ($1\leq i\leq {n_2}$) are $n_2$ edge-disjoint $S$-trees. Since the construction of $B$ is similar to that of $A$, we only provide the construction of $A$ according to $d_{T_j}(u_1,u_2)$. If $d_{T_j}(u_1,u_2)=1$, set $A_1=xy$, $A_i=x(u_2,v_i)\cup (u_2,v_i)(u_1,v_{i+1})\cup (u_1,v_{i+1})y$ for $2\leq i\leq n_2-1$, $A_{n_2}=x(u_2,v_{n_2})\cup (u_2,v_{n_2})(u_1,v_2)\cup (u_1,v_2)y$. If $d_{T_j}(u_1,u_2)=2$, set $A_i=x(\eta_{j},v_i)\cup (\eta_{j},v_i)y$ for $1\leq i\leq n_2$. It remains to consider the case that $d_{T_j}(u_1,u_2)\geq3$. We first find out a $V(H(\eta_{j}))V(H(\beta_{j}))$-linkage $D_1,D_2,\cdots, D_{n_2}$ by the three-type edges according to $\eta_{j}T_j\beta_{j}$. Thus $A_i=x(\eta_{j},v_i)\cup D_i\cup (\beta_{j},v_i)y$, where $1\leq i\leq n_2$ and the subscript $i$ of $v_i$ is expressed module $n_2$ as one of $1,2,\cdots,n_2$. It remains to consider the case that $T_j$ is of type $\uppercase\expandafter{\romannumeral2}$, denote the neighbor of $u_1$, $u_2$, $u_3$ in $T_j$ by $\eta_{j}$, $\beta_{j}$, $\gamma_{j}$ and the only one three-degree vertex in $T_j$ by $w_j$($\eta_{j}$, $\beta_{j}$, $\gamma_{j}$ and $w_j$ may be the same vertex). We find a $V(H(\eta_{j}))V(H(\beta_{j}))$-linkage and a $V(H(\gamma_{j}))V(H(w_j))$-linkage respectively by three-type edges of $G\circ H$. And join $x$, $y$, $z$ respectively to $H(\eta_{j})$, $H(\beta_{j})$ and $H(\gamma_{j})$. Thus, we are able to construct $n_2$ edge-disjoint $S$-trees corresponding to $T_j$. Since $1\leq j\leq \ell_1$, thus $\ell_1{n_2}$ edge-disjoint $S$-trees are constructed on Stage $\uppercase\expandafter{\romannumeral2}$.

{\bf Case 2.}~~ Only two of $x$, $y'$, $z'$ are the same vertex in $H(u_1)$.

We only consider the case of $x=y'$ (The other cases when $x=z'$ or $y'=z'$ can be proved with similar arguments). We may assume $x=(u_1,v_1)$, $y=(u_2,v_1)$, $z=(u_3,v_2)$. Since $\lambda_3(H)=\ell_2$, there exist $\ell_2$ edge-disjoint $v_1v_2$-paths $P_1,P_2,\cdots, P_{\ell_2}$ in $H$ such that $\ell(P_1)\leq \ell(P_2)\leq \cdots \leq \ell(P_{\ell_2})$. For $1\leq i\leq \ell_2$, denote the vertex in $P_i$ adjacent to $v_1$ by $\alpha_i$, and the vertex in $P_i$ adjacent to $v_2$ by $\beta_i$, and denote by $P_i(u_3)$ in $H(u_3)$  corresponding to $P_i$. Since $\lambda_3(G)=\ell_1$, there are $\ell_1$ edge-disjoint $\{u_1,u_2,u_3\}$-trees $T_1,T_2,\cdots, T_{\ell_1}$ in $G$.

On Stage $\uppercase\expandafter{\romannumeral1}$, fix $T_1$. If $\ell(P_i)\geq 2$ for each $i$ with $1\leq i\leq \ell_2$, let $T_i^{*}=x(u_1,\alpha_{i})\cup y(u_2,\alpha_{i})\cup zP_i(u_3)(u_3,\alpha_{i}) \cup T_1(\alpha_{i})$. Otherwise, $\ell(P_1)=1$, that is, $v_1$ is adjacent to $v_2$, then $d_{H}(v_1)\geq \ell_2+1$ or $d_{H}(v_2)\geq \ell_2+1$. If $d_{H}(v_1)\geq \ell_2+1$, denote a neighbor of $v_1$ by $\alpha_{\ell_2+1}$ which is not $\alpha_i$ ($1\leq i\leq \ell_2$). Then $T_1^{*}=\{x(u_1,\alpha_{\ell_2+1}), y(u_2,\alpha_{\ell_2+1}), zx''', x'''(u_3,\alpha_{\ell_2+1})\}\cup T_1(\alpha_{\ell_2+1})$. If $d_{H}(v_2)\geq \ell_2+1$, denote the neighbor of $v_2$ by $\beta_{\ell_2+1}$ which is not $\beta_i$ ($1\leq i\leq \ell_2$). Then $T_1^{*}=\{xz',z'(u_1,\beta_{\ell_2+1}), yz'',z''(u_2,\beta_{\ell_2+1}), z(u_3,\beta_{\ell_2+1})\}\cup T_1(\beta_{\ell_2+1})$.

On Stage $\uppercase\expandafter{\romannumeral2}$, $\ell_1n_2$ edge-disjoint $S$-trees are constructed with similar arguments as Case 1.

{\bf Case 3.}~~ $x$, $y'$, $z'$ are distinct vertices in $H(u_1)$.

Assume that $x=(u_1,v_1)$, $y=(u_2,v_2)$, $z=(u_3,v_3)$. Let $S'=\{v_1,v_2,v_3\}$ and $S''=\{u_1,u_2,u_3\}$.

Since $\lambda_3(H)=\ell_2$, there are $\ell_2$ edge-disjoint $S'$-trees $T_1,T_2,\cdots, T_{\ell_2}$ in $H$. For $1\leq i\leq \ell_2$, denote by $\alpha_i$ the vertex in $T_i$ adjacent to a vertex $v_1$ in $S'$, and $\ell(T_i)$ denotes the number of edges in $T_i$, and denote by $T_i(u_2)$ ($T_i(u_3)$) in $H(u_2)$ ($H(u_3)$) the tree corresponding to $T_i$. Since $\lambda_3(G)=\ell_1$, there are $\ell_1$ edge-disjoint $S''$-trees $T_1',T_2',\cdots, T_{\ell_1}'$ in $G$.

On Stage $\uppercase\expandafter{\romannumeral1}$, fix $T_1'$. If $\ell (T_i)\geq 3$ for each $i$ with $1\leq i\leq \ell_2$, let $T_i^{*}=x(u_1,\alpha_{i})\cup yT_i(u_2)(u_2,\alpha_{i})\cup zT_i(u_3)(u_3,\alpha_{i})\cup T'_1(\alpha_{i})$. Otherwise, similar to Case 2 of Lemma \ref{lem2}, the difficult case is that there is an $S'$-tree of length 2. Suppose $\ell (T_1)=2$ and assume $d_{T_1}(v_2)=2$. Thus $T_1^{*}$ has three structures as shown in Figure 7 where $T'_1$ is of type $\uppercase\expandafter{\romannumeral2}$ in Figure 8($a$); $T'_1$ is of type $\uppercase\expandafter{\romannumeral1}$ and $d_{T'_1}(v_2)=1$ in Figure 8($b$); $T'_1$ is of type $\uppercase\expandafter{\romannumeral1}$ and $d_{T'_1}(v_2)=2$ in Figure 8($c$).

\begin{figure}[h,t,b,p]
\begin{center}
\scalebox{0.9}[0.9]{\includegraphics{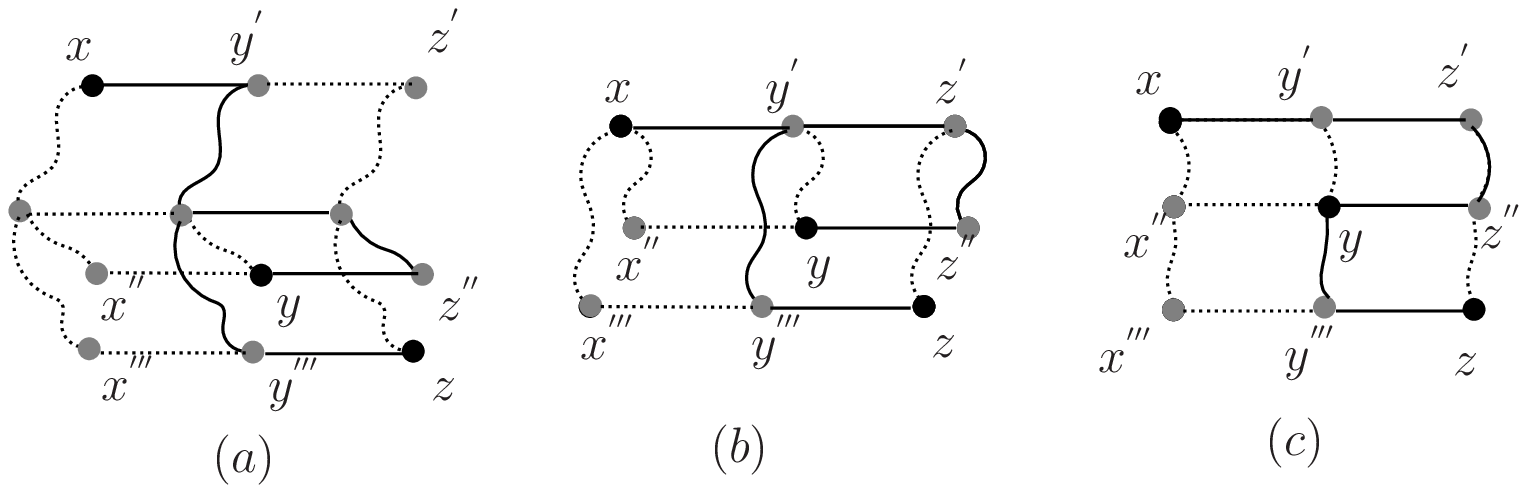}}\\[20pt]

Figure~8. One $S$-tree corresponding to $T'_1$,\\~~~~ the solid lines stand for the edges of the $S$-tree.
\end{center}
\end{figure}

On Stage $\uppercase\expandafter{\romannumeral2}$, $\ell_1n_2$ edge-disjoint $S$-trees are constructed with similar arguments as Case 1.

By Observation \ref{obs2}, in each case, these $\ell_2+\ell_1n_2$ $S$-trees are edge-disjoint, as desired.
\end{proof}

By combining the preceding three lemmas, we get the following result.

\begin{thm}\label{thm1}
Let $G$ and $H$ be two non-trivial graphs and $G$ is connected. Then $\lambda_3(G\circ H)\geq \lambda_3(H)+\lambda_3(G)|V(H)|$. Moreover, the lower bound is sharp.
\end{thm}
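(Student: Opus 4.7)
The theorem is essentially a case-distinction corollary of the three lemmas just established, so the plan has two parts: reduce to those lemmas, and then exhibit a sharp example.

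First I would fix an arbitrary $S=\{x,y,z\}\subseteq V(G\circ H)$ and classify it by how it meets the $H$-layers. Since $S$ has three vertices, exactly one of the following mutually exclusive and exhaustive situations occurs: all three of $x,y,z$ lie in a common layer $H(u_i)$ (covered by Lemma \ref{lem1}); exactly two of them share a layer while the third lies in another (covered by Lemma \ref{lem2}); or the three lie in three pairwise distinct layers (covered by Lemma \ref{lem9}). Each of those lemmas produces $\ell_2+\ell_1 n_2=\lambda_3(H)+\lambda_3(G)|V(H)|$ edge-disjoint $S$-trees in $G\circ H$. Because $S$ was arbitrary, taking the minimum over all 3-subsets of $V(G\circ H)$ gives the advertised lower bound $\lambda_3(G\circ H)\geq \lambda_3(H)+\lambda_3(G)|V(H)|$ at once.

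For the sharpness claim I would sandwich the lower bound against an upper bound. Theorem \ref{thm3} (already stated in the introduction) gives $\lambda_3(G\circ H)\leq \delta(H)+\delta(G)|V(H)|$, so any pair $(G,H)$ with $\lambda_3(G)=\delta(G)$ and $\lambda_3(H)=\delta(H)$ forces the bounds to coincide and our lower bound to be tight. Natural candidates are vertex-transitive, highly connected graphs, and I would propose taking $G$ and $H$ to be complete graphs (or the Petersen graph, or $K_{m,m}$, etc.) of appropriate orders, for which $\lambda_3$ and $\delta$ are known to agree by Observation \ref{lambdadelta}; a direct computation then verifies equality. The point is that, thanks to Theorem \ref{thm3}, sharpness reduces to a single concrete computation.

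The \emph{real} work is already done: the main obstacle is not this reduction, which is a one-paragraph case analysis, but the proofs of Lemmas \ref{lem1}, \ref{lem2} and \ref{lem9}, which required careful constructions of $S$-trees stage by stage (Stage~\uppercase\expandafter{\romannumeral1} using one- and two-type edges inside $H$-layers, and Stage~\uppercase\expandafter{\romannumeral2} using one- and three-type edges between layers) together with the edge-disjointness bookkeeping guaranteed by Observation \ref{obs2}. Once those are in hand, the theorem follows immediately, and verifying sharpness on a single well-chosen pair $(G,H)$ is a bounded check.
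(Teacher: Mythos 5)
Your reduction of the lower bound to Lemmas \ref{lem1}, \ref{lem2} and \ref{lem9} is exactly what the paper does: the theorem is stated immediately after the three lemmas with the one-line justification that they exhaust the possible positions of $S=\{x,y,z\}$ relative to the $H$-layers, so that part is fine.

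The sharpness part, however, has a genuine flaw in the choice of examples. Your strategy --- match the lower bound against the upper bound $\delta(H)+\delta(G)|V(H)|$ by choosing $G$ and $H$ with $\lambda_3(G)=\delta(G)$ and $\lambda_3(H)=\delta(H)$ --- is sound and is essentially what the paper does in Corollary \ref{cor1}. But every concrete candidate you name (complete graphs, the Petersen graph, $K_{m,m}$) is regular, hence has two \emph{adjacent} vertices of minimum degree, and Proposition \ref{delta-1} then forces $\lambda_3\leq\delta-1$ for each of them; for instance $\lambda_3(K_n)=n-2<n-1=\delta(K_n)$. Observation \ref{lambdadelta} only gives the inequality $\lambda_3(G)\leq\delta(G)$, not the equality you invoke, so ``vertex-transitive, highly connected'' graphs are precisely the wrong place to look. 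The paper instead takes $G=P_{n_1}$ and $H=P_{n_2}$: for a path on at least three vertices the two degree-one vertices are nonadjacent, $\lambda_3(P_n)=1=\delta(P_n)$, and both bounds collapse to $n_2+1$, giving $\lambda_3(P_{n_1}\circ P_{n_2})=n_2+1$. Replacing your examples by paths (or any graphs in which no two minimum-degree vertices are adjacent and $\lambda_3=\delta$) repairs the argument.
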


We know that the lower bounds of Theorem \ref{thm1} is sharp by the following corollary.

\begin{cor}\label{cor1}
$\lambda_3(P_{n_1}\circ P_{n_2})=n_2+1$.
\end{cor}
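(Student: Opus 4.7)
The plan is to sandwich $\lambda_3(P_{n_1} \circ P_{n_2})$ between the lower bound of Theorem~\ref{thm1} and the upper bound of Theorem~\ref{thm3}, which happen to coincide for paths.

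First I would pin down the needed base fact: $\lambda_3(P_n) = 1$ for every $n \geq 3$. Indeed, Observation~\ref{lambdadelta} gives $\lambda_3(P_n) \leq \delta(P_n) = 1$, while the lower bound $\lambda_3(P_n) \geq 1$ holds because any three vertices of the connected graph $P_n$ are spanned by a sub-path, which is itself a Steiner tree. In particular both $\lambda_3(P_{n_1})$ and $\lambda_3(P_{n_2})$ equal $1$, and likewise $\delta(P_{n_1}) = \delta(P_{n_2}) = 1$.

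Applying Theorem~\ref{thm1} with $G = P_{n_1}$ and $H = P_{n_2}$ then gives
\[
\lambda_3(P_{n_1} \circ P_{n_2}) \;\geq\; \lambda_3(P_{n_2}) + \lambda_3(P_{n_1})\,|V(P_{n_2})| \;=\; 1 + n_2.
\]
For the matching upper bound I would invoke Theorem~\ref{thm3} and keep only the second term in the minimum, namely $\delta(P_{n_2}) + \delta(P_{n_1})\,|V(P_{n_2})| = 1 + n_2$; the first term $\lfloor (4\lambda_3(P_{n_1})+2)/3 \rfloor |V(P_{n_2})|^2 = 2 n_2^2$ is larger for $n_2 \geq 1$ and can be discarded. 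Combining the two estimates yields $\lambda_3(P_{n_1} \circ P_{n_2}) = n_2 + 1$.

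No step here is a genuine obstacle; the corollary is essentially a bookkeeping witness that simultaneously shows Theorem~\ref{thm1} and the minimum-degree half of Theorem~\ref{thm3} to be tight. The only thing worth being careful about is the trivial identification $\lambda_3(P_n) = 1$, which is why I would spell it out explicitly rather than cite it.
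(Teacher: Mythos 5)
Your proof is correct and follows essentially the same route as the paper: the lower bound comes from Theorem~\ref{thm1} (using $\lambda_3(P_n)=1$), and the upper bound is the minimum-degree bound $\delta(P_{n_1}\circ P_{n_2})=n_2+1$, which the paper cites directly via Observation~\ref{lambdadelta} while you reach the same quantity as the second term of the minimum in Theorem~\ref{thm3}. The extra care you take in verifying $\lambda_3(P_n)=1$ and in discarding the first term of the minimum is sound but does not change the argument.
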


\begin{proof}
By Theorem \ref{thm1}, $\lambda_3(P_{n_1}\circ P_{n_2})\geq n_2+1$. On the other hand, by Observation \ref{lambdadelta}, $\lambda_3(P_{n_1}\circ P_{n_2})\leq \delta(P_{n_1}\circ P_{n_2})=n_2+1$. Thus $\lambda_3(P_{n_1}\circ P_{n_2})=n_2+1$.
\end{proof}

\section{Upper bound of $\lambda_3(G\circ H)$}
In this section, we give the upper bound of generalized 3-edge-connectivity of the lexicographic product of two graphs.

Yang and Xu \cite{Yang} investigated the classical edge-connectivity of
the lexicographic product of two graphs.

\begin{thm}\label{th4}{\upshape\cite{Yang}}
Let $G$ and $H$ be two non-trivial graphs and $G$ is connected, then $$\lambda(G\circ H)=\min\{\lambda(G)|V(H)|^2,\delta(H)+\delta(G)|V(H)|\}.$$
\end{thm}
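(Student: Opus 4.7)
The plan is to prove two matching inequalities; write $n=|V(H)|$ throughout. For the upper bound the two quantities in the minimum are realized by explicit edge cuts of $G\circ H$. First, if $F_G$ is a minimum edge cut of $G$, then $\{(u,v)(u',v'):uu'\in F_G,\ v,v'\in V(H)\}$ is an edge cut of $G\circ H$ of size exactly $\lambda(G)n^2$, because for every $uu'\in E(G)$ the bipartite graph between $H(u)$ and $H(u')$ in $G\circ H$ is the complete bipartite graph $K_{n,n}$. Second, the edges incident to any minimum-degree vertex of $G\circ H$ form an edge cut of size $\delta(G\circ H)=\delta(H)+\delta(G)n$, using the identity already recorded in the Preliminaries. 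Hence $\lambda(G\circ H)\le\min\{\lambda(G)n^{2},\ \delta(H)+\delta(G)n\}$.

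For the lower bound I take a minimum edge cut $F$ of $G\circ H$ with parts $X,Y$ and write $X_u=X\cap V(H(u))$, $Y_u=Y\cap V(H(u))$ for each $u\in V(G)$. The analysis splits according to whether some $H$-layer is cut by $F$. If no layer is cut, put $A=\{u:X_u=V(H(u))\}$ and $B=\{u:Y_u=V(H(u))\}$; then $\{A,B\}$ is a bipartition of $V(G)$, every edge of $F$ sits inside one of the $K_{n,n}$'s between $H(u)$ and $H(u')$ for some $uu'\in E_G(A,B)$, and each such $G$-edge contributes the full $n^{2}$ edges to $F$. Since $|E_G(A,B)|\ge\lambda(G)$, this gives $|F|\ge\lambda(G)n^{2}$.

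If some layer $H(u_0)$ is cut, fix one such $u_0$ and, after swapping $X$ with $Y$ if needed, set $a=|X_{u_0}|$ with $1\le a\le n/2$. Counting only the $F$-edges incident to $V(H(u_0))$ yields
\[
|F|\ \ge\ |E_H(X_{u_0},Y_{u_0})|\ +\ \sum_{u'\in N_G(u_0)}\bigl(a|Y_{u'}|+(n-a)|X_{u'}|\bigr).
\]
Since $a\le n/2$, each parenthesized expression equals $an+(n-2a)|X_{u'}|\ge an$, so the full sum is at least $a\,n\,d_G(u_0)\ge a\,n\,\delta(G)$. When $a=1$, $X_{u_0}=\{v_0\}$ is a single vertex and the first term is exactly $d_H(v_0)\ge\delta(H)$, giving $|F|\ge\delta(H)+\delta(G)n$. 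When $a\ge 2$ I discard the first term and use $a\,n\,\delta(G)\ge 2n\delta(G)$; because $G$ is non-trivial and connected, $\delta(G)\ge 1$, and $\delta(H)\le n-1<n\le n\delta(G)$ rearranges to $2n\delta(G)\ge\delta(H)+n\delta(G)$, again yielding $|F|\ge\delta(H)+\delta(G)n$. Combining the two cases gives $|F|\ge\min\{\lambda(G)n^{2},\ \delta(H)+\delta(G)n\}$, matching the upper bound.

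The delicate branch is $a\ge 2$: the within-layer term $|E_H(X_{u_0},Y_{u_0})|$ cannot in general be bounded below by $\delta(H)$, so the whole contribution $\delta(H)$ must be recovered from three-type edges alone. The inequality $n\delta(G)\ge\delta(H)$, which follows from $\delta(H)\le n-1$ together with $\delta(G)\ge 1$, is precisely what rescues this branch, and it is the one place where the hypothesis that $G$ is connected is used in an essential way.
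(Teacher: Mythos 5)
Your argument is correct, but note that the paper offers no proof of this statement at all: Theorem~\ref{th4} is quoted verbatim from Yang and Xu \cite{Yang} as a known ingredient for the upper bound in Theorem~\ref{thm5}, so there is no in-paper proof to compare against. Your self-contained derivation is sound on both sides. The upper bound via the two explicit cuts (the $n^2$-blowup of a minimum cut of $G$, and the star of a minimum-degree vertex of $G\circ H$) is standard and correct. For the lower bound, the dichotomy on whether a minimum cut splits some $H$-layer is the right organizing idea: the ``no layer split'' branch correctly reduces to a cut of $G$ blown up by $n^2$, and in the ``split layer'' branch the identity $a|Y_{u'}|+(n-a)|X_{u'}|=an+(n-2a)|X_{u'}|\ge an$ (valid since you normalized $a\le n/2$) cleanly yields the $n\,\delta(G)$ contribution from three-type edges. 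Your handling of the delicate subcase $a\ge 2$ --- recovering the missing $\delta(H)$ from $an\delta(G)\ge 2n\delta(G)\ge n\delta(G)+n>n\delta(G)+\delta(H)$, using $\delta(G)\ge1$ and $\delta(H)\le n-1$ --- is exactly the point where such proofs usually stumble, and you got it right. One could add a sentence confirming that a minimum edge cut is precisely the set of edges between the two shores (so the edge counts are counts of edges actually in $F$), but that is routine.
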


In \cite{LLMS}, the sharp lower bound of the generalized 3-edge-connectivity of a graph is given as follows.

\begin{pro}\cite{LLMS}\label{lowerlambda}
Let $G$ be a connected graph with $n$ vertices. For every two
integers $s$ and $r$ with $s\geq 0$ and $r\in \{0,1,2,3\}$, if
$\lambda(G)=4s+r$, then $\lambda_3(G)\geq
3s+\lceil\frac{r}{2}\rceil$. Moreover, the lower bound is sharp. We
simply write $\lambda_3(G)\geq \frac{3\lambda-2}{4}$.
\end{pro}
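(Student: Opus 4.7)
Since $\lambda_3(G)=\min_{|S|=3}\lambda_G(S)$, it suffices to fix an arbitrary triple $S=\{x,y,z\}\subseteq V(G)$ and produce $3s+\lceil r/2\rceil$ edge-disjoint $S$-trees in $G$, where $\lambda(G)=4s+r$. I would proceed by induction on $s$. The base cases ($s=0$, $r\in\{0,1,2,3\}$) give targets $0,1,1,2$: when $r\leq 2$ any spanning tree of the connected graph $G$ yields one $S$-tree, and when $r=3$ two edge-disjoint $S$-trees can be built directly by taking three edge-disjoint $xy$-paths from Menger's theorem and gluing two of them to appropriate $xz$- or $yz$-paths.

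For the inductive step ($s\geq 1$), the strategy is to exhibit three edge-disjoint $S$-trees $T_1,T_2,T_3$ such that the residual graph $G'=G-E(T_1\cup T_2\cup T_3)$ still satisfies $\lambda(G')\geq 4(s-1)+r$; applying the inductive hypothesis to $G'$ then supplies $3(s-1)+\lceil r/2\rceil$ additional edge-disjoint $S$-trees, totalling $3s+\lceil r/2\rceil$. The heuristic behind the ratio ``three trees cost four units of $\lambda$'' is degree-counting inside $S$: every $S$-tree contributes at least one edge at each vertex of $S$, so three trees remove at least three edges from any cut isolating an $S$-vertex, and with a careful routing we should be able to keep the loss on any minimum cut at exactly $4$. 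To construct $T_1,T_2,T_3$ concretely, I would invoke Menger's theorem between the three pairs in $S$ to obtain $\lambda$ edge-disjoint $xy$-paths, $xz$-paths, and $yz$-paths, then merge triples of them via a pairing/swap argument into edge-disjoint $S$-trees; Proposition~\ref{k-2} is useful here to move at most two of the trees away from $E(G[S])$.

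The principal obstacle is the edge-connectivity bookkeeping: an arbitrary choice of three edge-disjoint $S$-trees may drop $\lambda$ by more than $4$ on some minimum cut and derail the induction. Handling this almost certainly requires either a structural reduction using Mader's edge-splitting theorem at vertices outside $S$ (making $G$ uniform with every non-terminal vertex of degree $\lambda$, so that minimum cuts behave predictably), or an explicit exchange argument that re-routes one of $T_1,T_2,T_3$ whenever a collapsing cut is detected. Sharpness of the resulting bound is verified by constructing extremal families, for example by gluing $s$ copies of a small $4$-edge-connected gadget (plus a residual graph handling $r$) along a common triple $S$, in which $\lambda=4s+r$ and $\lambda_3$ achieves exactly $3s+\lceil r/2\rceil$.
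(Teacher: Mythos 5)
The paper offers no proof of Proposition~\ref{lowerlambda}: it is imported verbatim from \cite{LLMS}, so there is no in-paper argument to compare yours against, and your proposal has to stand on its own. It does not. The entire weight of your induction rests on producing three edge-disjoint $S$-trees $T_1,T_2,T_3$ whose removal lowers the relevant connectivity by at most $4$, and that step is asserted, not proved. Your degree-counting heuristic only controls edge cuts that isolate a single vertex of $S$; a general cut $[X,\overline{X}]$ with $x\in X$ and $y,z\in \overline{X}$ is crossed at least once by each $S$-tree but possibly many times, so three arbitrary trees can destroy far more than four edges of a minimum cut. Worse, the residual condition as you state it --- global $\lambda(G')\geq 4(s-1)+r$ --- is unattainable in general: a non-terminal vertex of degree exactly $4s+r$ that serves as the degree-$3$ branch vertex of all three trees loses up to $9>4$ incident edges, so $\delta(G')$, and hence $\lambda(G')$, can fall below $4s+r-4$. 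At minimum you would have to fix $S$ and track the local quantities $\lambda(x,y)$, $\lambda(x,z)$, $\lambda(y,z)$ instead of $\lambda(G')$, and even then the crossing-number control is exactly where the $3/4$ ratio must be earned; naming Mader's splitting-off theorem or ``an exchange argument'' identifies plausible tools but does not close the step. The base case $r=3$ (two edge-disjoint $S$-trees whenever $\lambda\geq 3$) is likewise asserted rather than proved, and it is already a non-trivial instance of the statement.

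For orientation: the argument in \cite{LLMS} is, in essence, a direct Menger-type construction rather than an induction on $s$. One takes a maximum family of edge-disjoint paths joining two of the terminals, attaches the third terminal to this family by a second family of edge-disjoint paths, and then counts how many paths of the first family each attaching path can spoil; the ratio of three trees per four units of $\lambda$, together with the correction term $\lceil r/2\rceil$, emerges from that bookkeeping. If you want to pursue your inductive scheme, the honest formulation of the missing lemma is: from a graph in which $\min\{\lambda(x,y),\lambda(x,z),\lambda(y,z)\}\geq 4$ one can extract three edge-disjoint $S$-trees whose deletion decreases this minimum by at most $4$. That lemma is the theorem in disguise, and proving it would require essentially the same path-interaction analysis that the direct proof carries out.
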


From the above two results, we get the upper bounds of $\lambda_3(G\circ H)$.

\begin{thm}\label{thm5}
Let $G$ and $H$ be two non-trivial graphs and $G$ is connected. Then
$$
\lambda_3(G\circ H)\leq \min \Big \{\Big \lfloor
\frac{4\lambda_3(G)+2}{3}\Big \rfloor
|V(H)|^2,\delta(H)+\delta(G)|V(H)|\Big \}.
$$
Moreover, the upper bound is sharp.
\end{thm}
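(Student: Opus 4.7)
My plan is to prove the two upper bounds separately and then combine them via the minimum, and finally to observe sharpness by exhibiting a family where equality holds.

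First, I would establish the bound $\lambda_3(G\circ H)\leq \lfloor(4\lambda_3(G)+2)/3\rfloor |V(H)|^2$. The idea is to chain together three results already stated in the paper. By Proposition \ref{lambdalambda} applied to $G\circ H$, we have $\lambda_3(G\circ H)\leq \lambda(G\circ H)$. Next, Theorem \ref{th4} of Yang and Xu evaluates $\lambda(G\circ H)$ explicitly as $\min\{\lambda(G)|V(H)|^2,\delta(H)+\delta(G)|V(H)|\}$, so in particular $\lambda_3(G\circ H)\leq \lambda(G)|V(H)|^2$. The last step is to turn this bound in terms of $\lambda(G)$ into one in terms of $\lambda_3(G)$. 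Proposition \ref{lowerlambda} gives $\lambda_3(G)\geq (3\lambda(G)-2)/4$, which rearranges to $\lambda(G)\leq (4\lambda_3(G)+2)/3$; since $\lambda(G)$ is an integer, we may take the floor, yielding $\lambda(G)\leq \lfloor (4\lambda_3(G)+2)/3\rfloor$. Combining gives the first term of the minimum.

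Second, I would establish the bound $\lambda_3(G\circ H)\leq \delta(H)+\delta(G)|V(H)|$. This is essentially immediate from Observation \ref{lambdadelta}, which says $\lambda_3(F)\leq \delta(F)$ for every connected graph $F$, together with the formula $\delta(G\circ H)=\delta(G)|V(H)|+\delta(H)$ recorded in the Preliminaries. Since $G\circ H$ is connected (as $G$ is connected and $H$ is non-trivial), Observation \ref{lambdadelta} applies and delivers the second term of the minimum. Taking the minimum of these two upper bounds gives the asserted inequality.

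Finally, for sharpness, I would appeal to a simple example. A natural candidate is $G=H=K_n$ for suitable $n$, since $K_n\circ K_n\cong K_{n^2}$ and generalized edge-connectivities of complete graphs are well understood; for such graphs one can check both the $\delta$-based bound and the $\lambda$-based bound are attained (by choosing $n$ so that the minimum on the right side is realized by the appropriate term, and by verifying equality in Proposition \ref{lowerlambda} for $G=K_n$). Alternatively, Corollary \ref{cor1} already shows sharpness in the regime where the $\delta$-term dominates, since $\lambda_3(P_{n_1}\circ P_{n_2})=n_2+1=\delta(H)+\delta(G)|V(H)|$.

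Since the proof is really a composition of previously established results, there is no single hard step; the only place requiring care is the algebraic manipulation converting Proposition \ref{lowerlambda} into an upper bound on $\lambda(G)$, together with keeping the floor function honest. The rest is bookkeeping.
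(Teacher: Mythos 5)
Your proposal is correct and follows essentially the same route as the paper: chaining Proposition~\ref{lambdalambda}, Theorem~\ref{th4}, and the rearrangement of Proposition~\ref{lowerlambda} into $\lambda(G)\leq\lfloor(4\lambda_3(G)+2)/3\rfloor$, with sharpness witnessed by Corollary~\ref{cor1}. Your alternative derivation of the $\delta(H)+\delta(G)|V(H)|$ term via Observation~\ref{lambdadelta} is equivalent to reading it off from Theorem~\ref{th4}, so there is no substantive difference.
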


\begin{proof}
By Proposition \ref{lowerlambda}, $\lambda(G)\leq \lfloor\frac{4\lambda_3(G)+2}{3}\rfloor$.
By Proposition \ref{lambdalambda} and Theorem \ref{th4}, we have $\lambda_3(G\circ H)\leq\lambda(G\circ H)=\min\{\lambda(G)|V(H)|^2,\delta(H)+\delta(G)|V(H)|\}$. It follows that $\lambda_3(G\circ H)\leq \min \Big \{\Big \lfloor\frac{4\lambda_3(G)+2}{3}\Big \rfloor
|V(H)|^2,\delta(H)+\delta(G)|V(H)|\Big \}$. Moreover, the example in Corollary \ref{cor1} shows that the upper bound is sharp. The proof is now complete.
\end{proof}

\end{document}